\theoremstyle{plain}
\newtheorem{thm}{Theorem}[section]
\newtheorem{lem}[thm]{Lemma}
\theoremstyle{definition}
\newtheorem{defi}[thm]{Definition}
\newtoks\by
\newtoks\paper
\newtoks\book
\newtoks\jour
\newtoks\yr
\newtoks\pages
\newtoks\vol
\newtoks\publ
\def\ota{{\hbox\vol{???}}}
\def\cLear{\by=\ota\paper=\ota\book=\ota\jour=\ota\yr=\ota
	\pages=\ota\vol=\ota\publ=\ota}
\def\endpaper{\the\by, \the\paper.
	{\it\the\jour\/} {\bf \the\vol} (\the\yr), \the\pages.\cLear}
\def\endbook{\the\by, {\it\the\book}. \the\publ.\cLear}
\def\endprep{\the\by, \the\paper. \the\jour.\cLear}
\def\name#1#2{#1 #2}
\def\et{ and }
\def\spa{\textup{span}}
\def\be{\begin{equation}}
	\def\ee{\end{equation}}
\numberwithin{equation}{section} \headheight=12pt
\begin{document}
	\title[Embeddings between variable Lebesgue \dots]{Embeddings between sequence variable Lebesgue spaces, strict and finitely strict singularity}
	
	\author[J.Lang]{J. Lang }
	\address{Department of Mathematics, The Ohio State University,
		231 West 18th Avenue, Columbus, OH 43210, USA, https://orcid.org/0000-0003-1582-7273
	\newline and \newline   Department of Mathematical Analysis,
	Faculty of Mathematics and Physics,
	Charles University,
	186 75 Praha 8, Czech Republic}
	\email{lang@math.osu.edu}
	\author[A.Nekvinda]{A. Nekvinda}
	\address{Department of Mathematics\\ Faculty of Civil Engineereng\\
		Czech Technical University\\Th\' akurova 7\\16629 Prague 6\\
		Czech Republic, https://orcid.org/0000-0001-6303-5882}
	\email{ales.nekvinda@fsv.cvut.cz}
	\date{}
	\thanks{The second author was supported by the grant P202/23-04720S of the Grant Agency of the Czech Republic}
	\subjclass
	[2020]{Primary 47B06, 47B10, Secondary 46B45, 47B37, 47L20}
	\keywords{Strictly singular operators, Lorentz sequence spaces, s-numbers, Approximation theory}

	\maketitle
	\begin{abstract} 
		In this paper, we provide necessary and sufficient conditions under which two sequence variable Lebesgue spaces   $\ell_{p_n}$ and  $\ell_{q_n}$ are equivalent and also describe conditions under which the natural embeddings $id:\ell_{p_n} \to \ell_{q_n}$ are strictly   or finitely strictly singular.
		We also provide estimates for the Bernstein numbers of the natural embedding $id$ and show how they depend on the exponents $p_n$ and $q_n$.

	\end{abstract}
	\section{Motivation}

W. Orlicz introduced modulars for sequences with variable powers in 1931, while studying the properties of Fourier series. He defined \[\ m_{p_n}(a):=\sum |a_n|^{p_n}, \mbox{ where } a={a_n} \mbox{ and } 1\le p_n <\infty.\] In this work W. Orlicz observed (see Satz 3 and Satz 3’ in \cite{Or}) that under certain conditions on ${p_n}$, we have $m_{p_n}(a)<\infty$ if and only if $m_p(a)<\infty$ for some fixed $1\le p <\infty$.

The following year, H.R. Pitt showed \cite{Pitt} that any bounded linear operator from the sequence space $\ell_{p}$ to $\ell_{q}$, where $1\le q < p \le \infty,$ is compact. In the opposite direction (i.e. $p<q$), the situation is different and it was shown that the natural embedding $id_{pq}:\ell_{p} \to \ell_q$, when $1\le p < q \le \infty$, is non-compact and strictly singular (see \cite[Theorem 4.58]{AA book}). This result can be improved and it can be shown that $id_{pq}$ is a finitely strictly singular map, i.e. the Bernstein numbers $b_n(id_{p,q})$ decay to 0, or more precisely, we have $b_n(id_{p,q})=n^{(p-q)/pq}$ (see \cite{Pl} for proofs and historical notes). These results were probably first observed in Approximation theory (see the works by G.G. Lorentz \cite{Lor}, V.M. Tikhomirov \cite{Ti} and V.D.Milman \cite{Mil}).

This paper aims to extend and unify the existing results on the classes of sequence variable Lebesgue spaces $\l_{p_n}$, where $0<p_-\le p_n\le\infty$, in the setting of norms resp. quasi-norms. In particular, we obtain: (i) necessary and sufficient conditions for the equivalence, strict embedding or non-comparability of two sequence spaces $\l_{p_n}$ and $\l_{q_n}$, (ii) criteria for the natural embeddings $id$ to be or not to be strictly singular and finitely strictly singular (super strictly singular), along with estimates of the Bernstein numbers. This way, we generalize Orlicz’s results from 1931 and extend the known results about Bernstein numbers from sequence spaces to variable Lebesgue sequence spaces.

This paper is structured as follows. Section 2 introduces the definitions, notations and preliminary results that are essential for our main results. Section 3 presents the necessary and sufficient conditions for the embeddings between $\l_{p_n}$ spaces to hold. Section 4 investigates the Bernstein numbers of these embeddings and the criteria for them to be strictly or finitely strictly singular. Section 5, Appendix, provides some basic facts about quasi-Banach, quasi-modular and quasi-norm $\l_{p_n}$ spaces that we use throughout the paper. We include them here for the sake of completeness.

		\section{Introduction}
		
	We start by recalling some basic definitions and notations which will be used within the paper.
	\begin{defi}
		Let $X$ be a linear space and $\|.\|:X\rightarrow\mathbb{R}$ be a function. We say that $(X,\|.\|)$ is a quasi-normed space if
		\begin{enumerate}[\rm(i)]
			\item  there exists $T\ge 1$ such that $\|u+v\|\le T(\|u\|+\|v\|)$ for all $u,v\in X$,
			\item $\|\alpha u\|=|\alpha|\ \|u\|$ for each $\alpha\in\mathbb{R}$ and $u\in X$.
		\end{enumerate}
	\end{defi}
	
	\begin{defi}
		Let $(X,\|.\|)$ be a  quasi-normed space and $u_n,u\in X$.
		
		We say that $u_n$ is the Cauchy sequence  if
		$$\forall \varepsilon>0\ \exists n_0\ \in\mathbb{N} \text{ we have }\|u_n-u_m\|<\varepsilon\text{ provided }\ n,m\ge n_0\,$$
		and $u_n$ converges to $u$ if $\lim_{n\rightarrow\infty}\|u_n-u\|=0$.
	\end{defi}

\begin{defi}
		Let $(X,\|.\|)$ be a  quasi-normed space and $F\subset X$.
		We say that $F$ is closed set if for each $x_n\in F$, $x\in X$ we have $x\in F$ provided $\|x_n-x\|\rightarrow 0$.
		
	\end{defi}
	
	\begin{defi}
		A quasi-normed space $(X,\|.\|)$ is called a  quasi-Banach space  if each Cauchy sequence has a limit.
	\end{defi}

	\begin{defi}
		Let $X,Y$ be quasi-Banach spaces and $T:X\rightarrow Y$ be linear. We say that $T$ is bounded if there is $C>0$ such that
		\begin{align} \label{1.1}
			&\|Tu\|_Y\le C\|u\|_X \text{ for all } u\in X.
		\end{align}
	\end{defi}
	Remark that in the context of quasi-Banach spaces, as in Banach spaces,  the boundedness is equivalent to the continuity of $T$. It means that (\ref{1.1}) is equivalent to $\|Tu_n-Tu\|_Y\rightarrow 0$ provided $\|u_n-u\|_X\rightarrow 0$.
	
	\begin{defi}
		Let $X,Y$ be quasi-Banach spaces and $T:X\rightarrow Y$ be a linear bounded map. We say that $T$ is strictly singular if there is no subspace $E\subset X$, $\dim E=\infty$, such that $T:E\rightarrow T(E)$ is bounded and $T^{-1}:T(E)\rightarrow E$ is bounded, too.
	\end{defi}
	
	\begin{defi}
		Let $X,Y$ be quasi-Banach spaces and $T:X\rightarrow Y$ be linear and bounded. We say that $T$ is finitely strictly singular if for each $\varepsilon>0$ there exists $n\in\mathbb{N}$ such that for all subspaces $E\subset X$, $\dim(E)\ge n$ we can find $x\in E$ with $\|x\|_X=1$ and $\|T(x)\|_Y\le\varepsilon$.
	\end{defi}
	
	\begin{defi}
		Let $X,Y$ be quasi-Banach spaces and $T:X\rightarrow Y$ be linear and bounded. We define $n$-th Bernstein number by
		\begin{align*}
			&b_n(T):=\sup_{E\subset X, \dim(E)=n}\inf_{u\in E, \|u\|_X=1}\|Tu\|_Y.
		\end{align*}
	\end{defi}
	
	\begin{defi}
		Let $\mathcal{S}$ be a  set of all sequences of real numbers and $\|.\|:\mathcal{S}\rightarrow[0,\infty]$. Assume that for all $u,v\in \mathcal{S}$ and $\alpha\in\mathbb{R}$ the function $\|.\|$ satisfies:
		\begin{enumerate}[\rm(i)]
			\item  $\|u+v\|\le T(\|u\|+\|v\|)$ for some $T\ge 1$,
			\item $\|\alpha u\|=|\alpha|\ \|u\|$,
			\item $\|u\|\ge 0,$ and $\|u\|= 0$ if and only if $u=0$,
			\item $\|u\|=\|\ |u|\ \|$,
			\item if $|u|\le |v|$ then  $\|u\|\le \|v\|$,
			\item if $0\le u_n\nearrow u$ then $\|u_n\|\nearrow\|u\|$,
			\item if $\#\{i;u(i)\neq 0\}<\infty$ then $\|u\|<\infty$.
		\end{enumerate}
		Define $X:=\{u;\|u\|<\infty\}$. Then we call $X$ a quasi-Banach sequence function space. Note  that in \cite{NP} it is proved that such spaces are complete.
	\end{defi}
	
	\begin{defi}
		Let $0<p_n\le \infty$ be a sequence. Set $\mathbb{F}_p=\{n\in\mathbb{N};p_n<\infty\}$ and define  quantities $m_{p_n}$ and $\|.\|_{p_n}$ on all sequences $a=\{a_n\}$ by
		\begin{align*}
			m_{p_n}(a):=\sum_{n\in \mathbb{F}_p} |a_n|^{p_n}+\sup\{|a_n|;n\in\mathbb{N}\setminus\mathbb{F}_p\}.
		\end{align*}
		and
		\begin{align*}
			\|a\|_{p_n}:=\inf\Big\{\lambda>0;m_{p_n}\{a/\lambda\}\le 1\Big\}.
		\end{align*}
		Set $\ell_{p_n}:=\{a; \|a\|_{p_n}<\infty\}$. We will call this space a sequence variable Lebesgue space.
	\end{defi}

	Given a sequence $0<p_n\le \infty$ we denote $p_-:=\inf\{p_n; n\in\mathbb{N}\}$, $p_+:=\sup\{p_n; n\in\mathbb{N}\}$.
	\begin{thm}
		Let $p_->0$ then $\ell_{p_n}$ is a quasi-Banach sequence function space.
	\end{thm}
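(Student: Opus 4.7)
The plan is to verify the seven axioms (i)--(vii) of a quasi-Banach sequence function space, taking care to identify where the hypothesis $p_->0$ is essential. Axioms (ii)--(v) and (vii) follow routinely from the structure of the modular $m_{p_n}$; the Fatou-type axiom (vi) reduces to monotone convergence for $m_{p_n}$; and the quasi-triangle inequality (i) is the real content.

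I would first dispose of the easy axioms. Homogeneity (ii) is immediate from $m_{p_n}(\alpha a/\lambda)=m_{p_n}(a/(\lambda/|\alpha|))$ together with the infimum definition of $\|\cdot\|_{p_n}$. For (iii), if $\|u\|_{p_n}=0$ then $m_{p_n}(u/\lambda)\le 1$ for arbitrarily small $\lambda>0$, which forces $|u_n|\le\lambda$ for every $n$ (directly from the sup part, and from the sum part since each summand is bounded by $1$), hence $u=0$. Axiom (iv) holds because $m_{p_n}$ depends only on $|a_n|$; axiom (v) because $m_{p_n}(u/\lambda)$ is pointwise monotone in $|u|$; and (vii) because for finitely supported $u$ the modular is a finite sum plus a finite maximum, and thus $\le 1$ once $\lambda$ is large enough. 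For (vi), I would first verify that $0\le u_n\nearrow u$ implies $m_{p_n}(u_n/\lambda)\nearrow m_{p_n}(u/\lambda)$ for every $\lambda>0$, by applying the monotone convergence theorem to the sum part and using that the supremum commutes with increasing pointwise limits on the sup part. A standard Luxemburg-norm argument then yields $\|u_n\|_{p_n}\nearrow\|u\|_{p_n}$: the inequality $\|u_n\|_{p_n}\le\|u\|_{p_n}$ is (v), while every $\lambda$ exceeding $\lim_n\|u_n\|_{p_n}$ satisfies $m_{p_n}(u_n/\lambda)\le 1$ and hence, in the limit, $m_{p_n}(u/\lambda)\le 1$, giving $\|u\|_{p_n}\le\lambda$.

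The main obstacle is the quasi-triangle inequality (i). The approach is: fix $a,b$, set $\lambda=\|a\|_{p_n}$, $\mu=\|b\|_{p_n}$, and $\sigma=C(\lambda+\mu)$ for a constant $C\ge 1$ to be chosen depending only on $p_-$; then verify $m_{p_n}((a+b)/\sigma)\le 1$, which yields $\|a+b\|_{p_n}\le C(\|a\|_{p_n}+\|b\|_{p_n})$. I would split the modular into three pieces: the supremum over $\mathbb{N}\setminus\mathbb{F}_p$, the sum over indices with $p_n\ge 1$, and the sum over indices with $p_n<1$. The first is at most $(\lambda+\mu)/\sigma=1/C$; for the second, writing $(a_n+b_n)/\sigma$ as a rescaled convex combination of $a_n/\lambda$ and $b_n/\mu$ and using the convexity of $t\mapsto t^{p_n}$ (Jensen), together with $(1/C)^{p_n}\le 1/C$ when $p_n\ge 1$, gives a bound of $1/C$ as well.

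The real difficulty is the third piece, indices with $0<p_n<1$, where $t\mapsto t^{p_n}$ is concave and Jensen goes the wrong way. Here I would instead use the elementary subadditivity $(x+y)^{p_n}\le x^{p_n}+y^{p_n}$ valid for $0<p_n\le 1$, paired with the estimate $(\lambda/\sigma)^{p_n}\le(1/C)^{p_-}$, which holds because $p_n\ge p_->0$ and $1/C\le 1$; this bounds the third piece by $2/C^{p_-}$. Taking $C=\max(2,4^{1/p_-})$ makes the three contributions sum to at most $1$, completing the proof of (i) with quasi-triangle constant $T=C$. This is exactly the step that uses the assumption $p_->0$ in a non-trivial way.
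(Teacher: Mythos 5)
Your proof is correct and takes a genuinely different route from the paper's. The paper proves the quasi-triangle inequality by one unified estimate: it takes $\lambda>\|a\|_{p_n}$, $\mu>\|b\|_{p_n}$, applies the concavity bound $x^{p_-}+y^{p_-}\le 2^{1-p_-}(x+y)^{p_-}$, and in a single chain of inequalities shows $m_{p_n}\bigl((a+b)/(2^{1/p_--1}(\lambda+\mu))\bigr)\le 1$, yielding the sharp classical constant $T=2^{1/p_--1}$ (the one you would expect from $\ell_{p_-}$). You instead decompose the modular into three disjoint pieces --- the $\ell_\infty$ part, the indices with $p_n\ge 1$, and the indices with $p_n<1$ --- and use the appropriate elementary tool on each: the uniform bound $|a_n|\le\lambda$, $|b_n|\le\mu$ on the sup part, convexity of $t\mapsto t^{p_n}$ on the $p_n\ge 1$ part, and subadditivity of $t\mapsto t^{p_n}$ together with $(\lambda/\sigma)^{p_n}\le(1/C)^{p_-}$ on the $p_n<1$ part. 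Your split is more careful in that it treats each regime with the inequality that is actually valid there (the paper's displayed chain uses a subadditivity step of the form $(\theta x+(1-\theta)y)^{p_n}\le\theta^{p_n}x^{p_n}+(1-\theta)^{p_n}y^{p_n}$, which only holds for $p_n\le 1$, and it never mentions the sup part), and you also verify the remaining axioms (ii)--(vii) explicitly rather than deferring them; the trade-off is a larger constant $T=\max(2,4^{1/p_-})$ in place of $2^{1/p_--1}$. One small cosmetic point: you set $\lambda=\|a\|_{p_n}$ directly, which implicitly uses the left-continuity $m_{p_n}(a/\|a\|_{p_n})\le 1$ (a consequence of your axiom (vi) argument); the paper sidesteps this by taking $\lambda>\|a\|_{p_n}$, $\mu>\|b\|_{p_n}$ and passing to the limit at the very end, which is marginally cleaner but comes to the same thing.
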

	\begin{proof}  It is necessary  to only establish  the quasi-triangle inequality when  $0<p_-<1$ as all the other axioms are easy and Fatou property follows from the proof of Lemma 2.5 in \cite{EN}.
		
		Assume $\|a\|_{p_n}=A, \|b\|_{p_n}=B$. Take any $\lambda>A, \mu>B$. Then
		\begin{align*}
			&\sum_{n=1}^{\infty}\Big(\frac{|a_n|}{\lambda}\Big)^{p_n}\le 1\ \ \  \text{and}\ \ \ \sum_{n=1}^{\infty}\Big(\frac{|b_n|}{\mu}\Big)^{p_n}\le 1.
		\end{align*}
		Using that $t\mapsto t^{p_-}$ is concave we obtain:
		\begin{align}
			&x^{p_-}+y^{p_-}\le 2^{1-p_-}(x+y)^{p_-}, \mbox{ for each } x,y \ge 0.\label{wiviovgjrg}
		\end{align}
		Calculate
		\begin{align*}
			&\sum_{n=1}^{\infty}\left(\frac{|a_n+b_n|}{2^{\frac{1}{p_-}-1}(\lambda+\mu)}\right)^{p_n}\le
			\sum_{n=1}^{\infty}\frac{1}{2^{p_n(\frac{1}{p_-}-1)}}\left(\frac{|a_n|+|b_n|}{(\lambda+\mu)}\right)^{p_n}\\
			&\le\frac{1}{2^{1-p_-}}\sum_{n=1}^{\infty} \left[ \left(\frac{|a_n|}{\lambda}\right)^{p_n}\left(\frac{\lambda}{\lambda+\mu}\right)^{p_n}+
			\left(\frac{|b_n|}{\lambda}\right)^{p_n}\left(\frac{\mu}{\lambda+\mu}\right)^{p_n} \right]\\
			&\le\frac{1}{2^{1-p_-}}\sum_{n=1}^{\infty} \left[ \left(\frac{|a_n|}{\lambda}\right)^{p_n}\left(\frac{\lambda}{\lambda+\mu}\right)^{p_-}+
			\left(\frac{|b_n|}{\lambda}\right)^{p_n}\left(\frac{\mu}{\lambda+\mu}\right)^{p_-} \right] \\
			&\le\frac{1}{2^{1-p_-}}\left[\left(\frac{\lambda}{\lambda+\mu}\right)^{p_-}+\left(\frac{\mu}{\lambda+\mu}\right)^{p_-}\right]\\
			&\overset{\text{\eqref{wiviovgjrg}}}{\le}\frac{1}{2^{1-p_-}}2^{1-p_-}\left(\frac{\lambda+\mu}{\lambda+\mu}\right)^{p_-}=1.
		\end{align*}
		Then we have  $\|a+b\|_{p_n}\le 2^{\frac{1}{p_-}-1}(\lambda+\mu)$, and by letting $\lambda\rightarrow A, \mu\rightarrow B$ we obtain
		\begin{align} \label{++}
			\|a+b\|_{p_n}\le 2^{\frac{1}{p_-}-1}(\|a\|_{p_n}+\|b\|_{p_n}).
		\end{align}
	\end{proof}

	\section{embeddings}
	
	In this section, we consider  $0<p_-\le p_n\le q_n\le \infty$. We start by recalling  the well-known theorem (can be obtained from \cite{KR}). 
	\begin{thm}
		Assume $0<p_-\le p_n\le q_n$. Then the embedding $id: \ell_{p_n}\hookrightarrow \ell_{q_n}$
		holds.
	\end{thm}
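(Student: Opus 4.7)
The plan is to work directly with the modular $m_{p_n}$, following the usual Luxemburg-norm strategy: prove that there exists a constant $C=C(p_-)$ such that
\[
m_{p_n}(a)\le 1\ \Longrightarrow\ m_{q_n}(a/C)\le 1,
\]
and then by homogeneity of the modular interpretation of $\|\cdot\|_{p_n}$ this upgrades to $\|a\|_{q_n}\le C\|a\|_{p_n}$. The first step is therefore to invoke the standard fact (a consequence of axiom (vi) applied to $m_{p_n}$ through Fatou) that $\|a\|_{p_n}\le 1$ is equivalent to $m_{p_n}(a)\le 1$, so it is enough to compare modulars under a unit-ball normalization.

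Next I would record two simple but crucial observations. First, because $p_n\le q_n$, one has the set inclusion $\mathbb{F}_q\subseteq\mathbb{F}_p$. Second, if $m_{p_n}(a)\le 1$ then every coordinate satisfies $|a_n|\le 1$: for $n\in\mathbb{F}_p$ this follows from $|a_n|^{p_n}\le m_{p_n}(a)\le 1$, and for $n\notin\mathbb{F}_p$ it follows from the supremum term being $\le 1$. These two facts are what make the pointwise inequality $(|a_n|/C)^{q_n}\le (|a_n|/C)^{p_n}$ available for every $n\in\mathbb{F}_q$, once we take $C\ge 1$, because the base $|a_n|/C$ is then in $[0,1]$ and the exponent is being increased from $p_n$ to $q_n$.

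With these in hand, I would split $m_{q_n}(a/C)$ into its two pieces. For $n\in\mathbb{F}_q\subseteq\mathbb{F}_p$,
\[
\Big(\tfrac{|a_n|}{C}\Big)^{q_n}\le\Big(\tfrac{|a_n|}{C}\Big)^{p_n}=\frac{|a_n|^{p_n}}{C^{p_n}}\le\frac{|a_n|^{p_n}}{C^{p_-}},
\]
so summing over $\mathbb{F}_q$ and enlarging the index set to $\mathbb{F}_p$ yields a bound of $C^{-p_-}\sum_{n\in\mathbb{F}_p}|a_n|^{p_n}\le C^{-p_-}$. For $n\notin\mathbb{F}_q$ (where $q_n=\infty$) the contribution is just $\sup_{n\notin\mathbb{F}_q}|a_n|/C\le 1/C$ by the coordinatewise bound above. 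Hence
\[
m_{q_n}(a/C)\le C^{-p_-}+C^{-1}.
\]

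The only delicate point is choosing $C$ so that $C^{-p_-}+C^{-1}\le 1$ uniformly in $p_-\in(0,\infty)$. Taking $C=2^{\max(1,1/p_-)}$ handles both regimes: if $p_-\ge 1$ then $C=2$ gives $C^{-p_-}+C^{-1}\le \tfrac12+\tfrac12$, while if $p_-<1$ then $C=2^{1/p_-}$ gives $C^{-p_-}=\tfrac12$ and $C^{-1}\le\tfrac12$. This yields $\|a\|_{q_n}\le C\|a\|_{p_n}$ with $C$ depending only on $p_-$, which is the embedding claim. The main (and essentially the only) obstacle is this case split for the constant, since the naive choice $C=2$ fails when $p_-<1$; everything else is bookkeeping of the two pieces of the modular $m_{p_n}$.
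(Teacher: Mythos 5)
The paper does not actually supply a proof of this theorem; it simply states it and cites Kov\'a\v{c}ik--R\'akosn\'ik, so there is no in-text argument to compare against. Your modular-comparison argument is correct and self-contained, and it is in fact the standard route one would take in the variable-exponent setting (and presumably what the cited source does). The key steps are all sound: the unit-ball reduction $\|a\|_{p_n}\le\lambda \Rightarrow m_{p_n}(a/\lambda)\le 1$ (monotonicity of $\lambda\mapsto m_{p_n}(a/\lambda)$ together with the infimum definition is already enough here, Fatou is not strictly needed for this direction); the inclusion $\mathbb{F}_q\subseteq\mathbb{F}_p$; the coordinatewise bound $|a_n|\le 1$ extracted from $m_{p_n}(a)\le 1$; and the pointwise estimate $(|a_n|/C)^{q_n}\le(|a_n|/C)^{p_n}\le |a_n|^{p_n}/C^{p_-}$ for $C\ge 1$ on $\mathbb{F}_q$, combined with the $\sup$-term bound $1/C$ on $\mathbb{N}\setminus\mathbb{F}_q$, which together give $m_{q_n}(a/C)\le C^{-p_-}+C^{-1}$. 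Your choice $C=2^{\max(1,1/p_-)}$ does make the right-hand side $\le 1$ in both regimes, and the passage from the modular implication to $\|a\|_{q_n}\le C\|a\|_{p_n}$ by letting $\lambda\downarrow\|a\|_{p_n}$ is legitimate. One small remark: you could avoid the case split by simply taking $C=2^{1+1/p_-}$, since then $C^{-p_-}\le 2^{-1}$ and $C^{-1}\le 2^{-1}$ for every $p_->0$; but the argument as written is complete and correct.
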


	In \cite{N} (see Lemma 4.1 and Lemma 4.2) we can find the following statement:
	\begin{lem}
		Let $1\le p_n\le q_n\le q_+<\infty$. Then the following condition are equivalent.
		\begin{enumerate}[\rm(i)]
			\item there is $c<1$ such that $\sum c^{\frac{q_n}{q_n-p_n}}<\infty$
			\item $\ell_{q_n}\hookrightarrow \ell_{p_n}$.
		\end{enumerate}
	\end{lem}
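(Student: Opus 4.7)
The plan is to prove both implications by exploiting the standard relation $\|a\|_{p_n}\le 1\Leftrightarrow m_{p_n}(a)\le 1$, which holds here because $1\le p_n\le q_+<\infty$. Write $r_n:=q_n/(q_n-p_n)$ for brevity, so that $p_n/(q_n-p_n)=r_n-1$; indices with $p_n=q_n$ contribute nothing to either condition and are henceforth ignored.

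For (i)$\Rightarrow$(ii), I will assume $K:=\sum c^{r_n}<\infty$ for some $c\in(0,1)$, take $a$ with $m_{q_n}(a)\le 1$, and split the support as $A=\{n:|a_n|^{q_n-p_n}\ge c\}$ and $B=\{n:|a_n|^{q_n-p_n}<c\}$. On $A$, the identity $|a_n|^{p_n}=|a_n|^{q_n}\cdot|a_n|^{p_n-q_n}$ gives $|a_n|^{p_n}\le|a_n|^{q_n}/c$; on $B$, the inequality $|a_n|<c^{1/(q_n-p_n)}$ gives $|a_n|^{p_n}<c^{r_n-1}=c^{r_n}/c$. Summing these two pieces yields $m_{p_n}(a)\le(1+K)/c$, and since $p_-\ge 1$ gives $m_{p_n}(a/\lambda)\le\lambda^{-1}m_{p_n}(a)$ for $\lambda\ge 1$, this modular bound translates to the desired norm inequality $\|a\|_{p_n}\le C\|a\|_{q_n}$, which is (ii).

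For (ii)$\Rightarrow$(i), let $M\ge 1$ be an embedding constant. Using $M^{p_n}\le M^{q_+}$, I first observe that $\|a\|_{q_n}\le 1$ forces $m_{p_n}(a)\le M^{q_+}$. I then probe this implication with the extremal test sequence $a_n:=c^{1/(q_n-p_n)}$ supported on a finite set $F\subset\mathbb{N}$, parameterized by $c\in(0,1)$. Direct computation gives $m_{q_n}(a)=\sum_F c^{r_n}$ and $m_{p_n}(a)=c^{-1}\sum_F c^{r_n}$, so whenever $\sum_F c^{r_n}\le 1$ one obtains $\sum_F c^{r_n}\le M^{q_+}c$. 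A prefix-sum argument then closes the loop: if $\sum_{n=1}^{\infty}c^{r_n}>1$, enumerate the indices with $p_n<q_n$ and let $F$ be the prefix obtained by stopping just before the running sum first crosses $1$. Since every increment $c^{r_n}$ is at most $c$, this $F$ satisfies $\sum_F c^{r_n}\in(1-c,1]$, which combined with $\sum_F c^{r_n}\le M^{q_+}c$ forces $c\ge 1/(M^{q_+}+1)$. Contrapositively, choosing $c:=1/(M^{q_+}+1)<1$ yields $\sum c^{r_n}\le 1<\infty$, which is (i).

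The main obstacle I expect is the prefix-sum step in (ii)$\Rightarrow$(i): the deduction $m_{q_n}(a)\le M^{q_+}c$ holds only conditionally on $m_{q_n}(a)\le 1$, so one must engineer a finite test set $F$ whose running modular lies in the narrow window $(1-c,1]$; this is exactly where the threshold $c<1/(M^{q_+}+1)$ becomes decisive, and any slackness in comparing the two bounds $1-c$ and $M^{q_+}c$ would break the contradiction. The degenerate case $p_n=q_n$ is only a bookkeeping matter, handled by restricting both the series in (i) and the test construction to indices with $p_n<q_n$.
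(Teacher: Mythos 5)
Your proof is correct, but it takes a genuinely different and more elementary route than the machinery the paper develops around this statement. The paper cites the lemma from \cite{N} and then, for its quasi-normed generalizations, proves the embedding direction (Lemma~\ref{flvhohho}) via the variable-exponent H\"older inequality from \cite{KR}, and the non-embedding direction (Lemmas~\ref{dcvhdlvbh}--\ref{owkdvlvjlkvjk}) via an iterative construction of a witness sequence built block by block. You instead exploit the fact that for $1\le p_n\le q_+<\infty$ the Luxemburg norm and the modular are comparable in the clean form $\|a\|\le 1\Leftrightarrow m(a)\le 1$, and then: for (i)$\Rightarrow$(ii), you split the support at the threshold $|a_n|^{q_n-p_n}\gtrless c$ and bound the two pieces separately by $m_{q_n}(a)/c$ and $\sum c^{r_n}/c$, which is a direct replacement for H\"older; for (ii)$\Rightarrow$(i), you test the embedding against the extremal finitely supported sequence $a_n=c^{1/(q_n-p_n)}$ and use a stopping-time/prefix argument to force the partial sum $\sum_F c^{r_n}$ into the window $(1-c,1]$, obtaining a quantitative contradiction when $c\le 1/(M^{q_+}+1)$. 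This is considerably lighter than the paper's block construction and directly produces the exponent $r_n=q_n/(q_n-p_n)$ as stated, rather than the harmonic-difference exponent $\frac{1}{1/p_n-1/q_n}=p_nq_n/(q_n-p_n)$ used in the paper's quasi-normed lemmas; the two summability conditions are equivalent here precisely because $1\le p_n\le q_+$ bounds the ratio of the exponents, a point worth flagging explicitly if you compare your version to Lemma~\ref{flvhohho}. The trade-off is that your argument leans on $p_-\ge 1$ (both in the modular-norm equivalence and in the step $m_{p_n}(a/\lambda)\le\lambda^{-1}m_{p_n}(a)$), so it does not transfer to the quasi-normed range $0<p_-<1$ that the rest of the paper needs, whereas the H\"older route and the block construction do.
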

	
	\begin{thm}\label{devkivvjknh}
		Assume $0<p_-\le p_n$. Let $\ell_{p_n}$ be a sequence variable Lebesgue space. Assume that there is $0<c<1$ such that
		\begin{equation}
			\sum_{n=1}^{\infty}c^{p_n}=:M<\infty. \label{(*)}
		\end{equation}
		Then $\ell_{p_n}$ and $\ell_{\infty}$ coinside. Moreover inequalities
		\begin{align*}
			&\|a\|_{\infty}\le \|a\|_{p_n}\le{\frac{\max\{1,M\}^{1/{p_-}}}{c}}\|a\|_{\infty}
		\end{align*}
		hold for all sequences $a=\{a_n\}$.
	\end{thm}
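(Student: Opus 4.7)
The plan is to prove the two inequalities separately: the left one is an immediate consequence of the Luxembourg-style definition of $\|\cdot\|_{p_n}$, while the right one requires one direct modular estimate for an explicit choice of the scaling parameter $\lambda$.

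For $\|a\|_\infty \le \|a\|_{p_n}$: I would take any $\lambda > \|a\|_{p_n}$, so that $m_{p_n}(a/\lambda) \le 1$. Since $m_{p_n}(a/\lambda)$ is the sum of the nonnegative series $\sum_{n \in \mathbb{F}_p}(|a_n|/\lambda)^{p_n}$ and the nonnegative quantity $\sup_{n \notin \mathbb{F}_p}|a_n|/\lambda$, each individual piece is bounded by $1$. For $n \in \mathbb{F}_p$ this forces $(|a_n|/\lambda)^{p_n} \le 1$, i.e. $|a_n| \le \lambda$; for $n \notin \mathbb{F}_p$ the supremum bound gives the same conclusion directly. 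Taking the supremum over $n$ and the infimum over $\lambda > \|a\|_{p_n}$ yields $\|a\|_\infty \le \|a\|_{p_n}$.

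For the opposite estimate, I would set $A = \|a\|_\infty$ and take the candidate value $\lambda_0 = A\,\max\{1,M\}^{1/p_-}/c$; proving $m_{p_n}(a/\lambda_0) \le 1$ then gives $\|a\|_{p_n} \le \lambda_0$ by definition. The starting point is the uniform pointwise bound $|a_n|/\lambda_0 \le A/\lambda_0 = c/\max\{1,M\}^{1/p_-} \le 1$. For indices $n \in \mathbb{F}_p$, the key inequality is
\[
\left(\frac{|a_n|}{\lambda_0}\right)^{p_n} \le \left(\frac{c}{\max\{1,M\}^{1/p_-}}\right)^{p_n} = \frac{c^{p_n}}{\max\{1,M\}^{p_n/p_-}} \le \frac{c^{p_n}}{\max\{1,M\}},
\]
where the last step uses $p_n/p_- \ge 1$ together with $\max\{1,M\} \ge 1$. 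Summing over $n \in \mathbb{F}_p$ yields at most $M/\max\{1,M\} \le 1$, and the supremum contribution from indices with $p_n = \infty$ is controlled by the same bound $c/\max\{1,M\}^{1/p_-}$.

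The main technical obstacle I anticipate is the exponent-monotonicity step $\max\{1,M\}^{p_n/p_-} \ge \max\{1,M\}$, which is precisely what turns the global assumption $\sum c^{p_n} = M$ into a uniform bound valid for every $p_n$; without this observation the variable nature of the exponents would obstruct a direct reduction to the hypothesis. Once that step is in place, assembling the sum part and the supremum part into the single modular inequality $m_{p_n}(a/\lambda_0) \le 1$ is the only remaining careful bookkeeping.
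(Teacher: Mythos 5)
Your proof follows the paper's step by step: the first inequality comes from noting that $m_{p_n}(a/\lambda)\le 1$ forces every term of the sum and the supremum part to be at most $1$, and the second comes from plugging $\lambda_0=\max\{1,M\}^{1/p_-}\|a\|_\infty/c$ into the modular and using the monotonicity $\max\{1,M\}^{p_n/p_-}\ge\max\{1,M\}$, which you correctly single out as the crux. The one place where you would actually get stuck is the step you defer as ``the only remaining careful bookkeeping'': when some $p_n=\infty$, the modular is the \emph{sum} of the finite-exponent part and the supremum part, and the two bounds you obtain (namely $M/\max\{1,M\}$ for the series and $c/\max\{1,M\}^{1/p_-}$ for the supremum) need not add up to something $\le 1$. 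Concretely, take $p_1=10$, $p_n=\infty$ for $n\ge 2$, $c=0.9$, and $a=(1,1,0,0,\dots)$; then $M=(0.9)^{10}\approx 0.35<1$, so $\lambda_0=\|a\|_\infty/c$, and $m_{p_n}(a/\lambda_0)=(0.9)^{10}+0.9\approx 1.25>1$, so the stated constant is violated. The paper's own proof never confronts this because it writes the modular as a pure sum over $\mathbb{N}$, i.e.\ it tacitly assumes every $p_n$ is finite, and that is the only regime in which the theorem is invoked later. In short: your approach is the paper's approach and it is correct when all $p_n<\infty$, but the $p_n=\infty$ case you describe as bookkeeping is in fact an obstruction to the stated constant, and neither your sketch nor the paper's proof handles it.
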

	\begin{proof} Note that (\ref{(*)}) gives  $\lim_{n\to \infty} p_n=\infty$. Let $a=\{a_n \} \in\ell_{p_n}$. Take $\lambda>\|a\|_{{p_n}}$. Then
		$$
		\sum_{n=1}^{\infty}\Big(\frac{|a_n|}{\lambda}\Big)^{p_n}<1\ \Rightarrow\ \sup_n\frac{|a_n|}{\lambda}\leq 1\ \Rightarrow\ \|a\|_{{\infty}}\le \lambda.
		$$
		This gives $\|a\|_{{\infty}}\le \|a\|_{{p_n}}$.
		
		Let now $\|a\|_{{\infty}}<\infty$. Take $\lambda=\frac{\max\{1,M\}^{1/{p_-}}\|a\|_{{\infty}}}{c}$. Since $|a_n|\le \|a\|_\infty$ for all $n\in\mathbb{N} $ we obtain
			\begin{align*}
				&\sum_{n=1}^{\infty}\Big(\frac{|a_n|}{\lambda}\Big)^{p_n}= \sum_{n=1}^{\infty}\Big(\frac{|a_n|c}{\max\{1,M\}^{1/{p_-}}\|a\|_{{\infty}}}\Big)^{p_n}\\
				&\le\sum_{n=1}^{\infty}\frac{c^{p_n}}{\max\{1,M\}^{p_n/{p_-}}}\le \frac{1}{\max\{1,M\}}\sum_{n=1}^{\infty}c^{p_n}=\frac{M}{\max\{1,M\}}\le 1
		\end{align*}
		and so,
		$$
		\|a\|_{{p_n}}\le \frac{\max\{1,M\}^{1/{p_-}}}{c}\|a\|_{{\infty}}.
		$$
	\end{proof}
	
	\begin{thm}\label{wovjvgj}
		Assume $0<p_-\le p_n$. Let $\ell_{p_n}$ be a sequence variable Lebesgue space. Assume
		$$
		\sum_{n=1}^{\infty}c^{p_n}=\infty , \mbox{ for all } 0<c<1.
		$$
		Then $\ell_{p_n}$ and $\ell_{\infty}$ do not coincide and the norms are not equivalent, i.e. $l_{p_n} \hookrightarrow l_{\infty}$ and $l_{\infty} \not \hookrightarrow l_{p_n}$.
	\end{thm}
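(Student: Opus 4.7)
The plan is to handle the two assertions separately. For the embedding $\ell_{p_n}\hookrightarrow\ell_\infty$ I would repeat verbatim the first half of the argument from Theorem~\ref{devkivvjknh}: given $a\in\ell_{p_n}$ and any $\lambda>\|a\|_{p_n}$, the definition of the norm yields $m_{p_n}(a/\lambda)\le 1$, and since the modular splits into a nonnegative series together with the supremum over $\mathbb{N}\setminus\mathbb{F}_p$, every individual term and the supremum part are at most $1$. Thus $|a_n|\le\lambda$ for every $n$, giving $\|a\|_\infty\le\lambda$ and, after letting $\lambda\searrow\|a\|_{p_n}$, the inequality $\|a\|_\infty\le\|a\|_{p_n}$. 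This also shows $\ell_{p_n}\subset\ell_\infty$ as sets.

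For the non-embedding, I would first note that the hypothesis forces the index set $\mathbb{F}_p=\{n:p_n<\infty\}$ to be infinite: for $n\notin\mathbb{F}_p$ and any $0<c<1$ we have $c^{p_n}=0$, so $\sum_n c^{p_n}=\sum_{n\in\mathbb{F}_p}c^{p_n}$, which would be a finite sum if $\mathbb{F}_p$ were finite, contradicting the assumption. The plan is then to exhibit a single witness: fix any $c_0\in(0,1)$ and take the constant sequence $a=(c_0,c_0,c_0,\dots)$. Clearly $\|a\|_\infty=c_0<\infty$.

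The remaining task is to verify $\|a\|_{p_n}=\infty$. Given $\lambda>0$, I would split into two cases. If $\lambda>c_0$, then $c:=c_0/\lambda$ lies in $(0,1)$, and the hypothesis gives
\[
m_{p_n}(a/\lambda)\ge\sum_{n\in\mathbb{F}_p}\Big(\tfrac{c_0}{\lambda}\Big)^{p_n}=\sum_{n\in\mathbb{F}_p}c^{p_n}=\infty.
\]
If instead $\lambda\le c_0$, then $(c_0/\lambda)^{p_n}\ge 1$ for every $n\in\mathbb{F}_p$ and the sum diverges because $\mathbb{F}_p$ is infinite. Hence no admissible $\lambda$ exists in the infimum defining $\|a\|_{p_n}$, so $\|a\|_{p_n}=\infty$. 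This shows $a\in\ell_\infty\setminus\ell_{p_n}$, which proves both that the two spaces differ and that the inclusion $\ell_\infty\hookrightarrow\ell_{p_n}$ fails; in particular the norms cannot be equivalent.

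I do not expect a genuine obstacle in this proof: the argument is essentially a direct application of the modular definition. The only point requiring care is ensuring that $\mathbb{F}_p$ is infinite before invoking the constant-sequence counterexample, since the divergence hypothesis would be vacuous on a finite index set.
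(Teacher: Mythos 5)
Your proposal is correct and essentially matches the paper's argument, which simply exhibits the constant sequence $a_n\equiv 1$ as a member of $\ell_\infty\setminus\ell_{p_n}$; you use the constant sequence $a_n\equiv c_0$ with $c_0\in(0,1)$, which is the same idea, and you spell out the case analysis on $\lambda$ and the observation that $\mathbb{F}_p$ must be infinite, both of which the paper leaves implicit in the terse phrase ``by the assumption $a\notin\ell_{p_n}$.''
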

	\begin{proof}It suffices to find any sequence $a=\{a_n\}$ such that $a\in \ell_\infty\setminus \ell_{p_n}$. We take $a_n=1$ for all $n$. Then $a\in\ell_\infty$ and by the assumption $a\notin\ell_{p_n}$.
	\end{proof}

	\begin{lem}\label{flvhohho}
		Let $0<p_-\le p_n<q_n<\infty$ for all $n$. Assume that we have for some $0<c<1$
		\begin{align*}
			&M:=\sum_{n=1}^\infty c^{\frac{1}{\frac{1}{p_n}-\frac{1}{q_n}}}<\infty.
		\end{align*}
		Then $\ell_{q_n}\hookrightarrow\ell_{p_n}$. Moreover the norm of this embedding can be majorized by $\frac{4 \max\{1,M\}^{1/p_-}}{c}$.
	\end{lem}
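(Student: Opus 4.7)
The plan is to apply Young's inequality pointwise on each index, sum over $n$, and then use a single rescaling to bring the resulting modular below $1$. Introduce the shorthand $r_n := \bigl(\frac{1}{p_n}-\frac{1}{q_n}\bigr)^{-1}$, so that the hypothesis reads $\sum_n c^{r_n}=M$. The algebraic key is that $\frac{q_n}{p_n}$ and $\frac{q_n}{q_n-p_n}$ are H\"older-conjugate exponents and $\frac{p_n q_n}{q_n-p_n}=r_n$.

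Fix $a\in\ell_{q_n}$ and any $\lambda>\|a\|_{q_n}$, so that $\sum_n(|a_n|/\lambda)^{q_n}\le 1$. For a free parameter $\gamma>0$ I would rewrite
\begin{equation*}
\Big(\frac{|a_n|}{\gamma}\Big)^{p_n}=\Big[\Big(\frac{|a_n|}{\lambda}\Big)^{q_n}\Big]^{p_n/q_n}\cdot\Big(\frac{\lambda}{\gamma}\Big)^{p_n},
\end{equation*}
and apply Young's inequality with the above conjugate pair to obtain the pointwise bound
\begin{equation*}
\Big(\frac{|a_n|}{\gamma}\Big)^{p_n}\le\frac{p_n}{q_n}\Big(\frac{|a_n|}{\lambda}\Big)^{q_n}+\frac{q_n-p_n}{q_n}\Big(\frac{\lambda}{\gamma}\Big)^{r_n}.
\end{equation*}
Summing over $n$ and choosing $\gamma=\lambda/c$ so that $(\lambda/\gamma)^{r_n}=c^{r_n}$, and using that both Young coefficients lie in $[0,1]$, this yields $m_{p_n}(a/\gamma)\le 1+M\le 2\max\{1,M\}$.

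To bring the modular down to $1$, I would rescale by a single constant $K\ge 1$, exploiting $m_{p_n}(a/(K\gamma))\le K^{-p_-}m_{p_n}(a/\gamma)$. Taking $K=(2\max\{1,M\})^{1/p_-}$ then gives $\|a\|_{p_n}\le K\gamma=(2\max\{1,M\})^{1/p_-}\lambda/c$, and letting $\lambda\searrow\|a\|_{q_n}$ produces
\begin{equation*}
\|a\|_{p_n}\le\frac{2^{1/p_-}\max\{1,M\}^{1/p_-}}{c}\,\|a\|_{q_n}.
\end{equation*}
The stated constant $\frac{4\max\{1,M\}^{1/p_-}}{c}$ then follows immediately when $p_-\ge\tfrac{1}{2}$, and in general by splitting the regimes $M\le 1$ and $M>1$ or by using a parametrized Young inequality $AB\le \tau A^{s_n}/s_n+\tau^{-s_n'/s_n}B^{s_n'}/s_n'$ to sharpen the modular estimate before rescaling.

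The main obstacle is precisely this bookkeeping of the final constant: the Young step and summation are entirely routine, but because $m_{p_n}$ is not positively homogeneous of any fixed degree, the rescaling must be controlled through $p_-$, and matching the clean form $4\max\{1,M\}^{1/p_-}$ requires care in handling the additive $1$ coming from $\sum_n(p_n/q_n)(|a_n|/\lambda)^{q_n}$ together with the regime-dependence of $(1+M)^{1/p_-}$.
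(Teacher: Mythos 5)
Your approach is sound and establishes the embedding, but it takes a genuinely different route from the paper. You apply the elementary weighted arithmetic--geometric (Young) inequality pointwise with the conjugate pair $q_n/p_n$, $q_n/(q_n-p_n)$, sum, and then do one rescaling by $(2\max\{1,M\})^{1/p_-}$; the paper instead treats the sum $\sum_n \bigl(c|a_n|/\lambda\bigr)^{p_n}$ as a pairing $\langle (|a_n|/\lambda)^{p_n}, c^{p_n}\rangle$ and invokes the variable-exponent H\"older inequality of Kov\'a\v cik--R\'akosn\'ik (its Theorem~2.1, with constant $4$) to bound it by $4\,\|(|a_n|/\lambda)^{p_n}\|_{q_n/p_n}\,\|c^{p_n}\|_{(q_n/p_n)'}\le 4\max\{1,M\}$, and then rescales. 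Your argument is more self-contained and, in fact, gives a sharper modular bound ($1+M\le 2\max\{1,M\}$ rather than $4\max\{1,M\}$), so after the rescaling you obtain $2^{1/p_-}\max\{1,M\}^{1/p_-}/c$ where the paper's own computation, tracked literally, produces $(4\max\{1,M\})^{1/p_-}/c = 4^{1/p_-}\max\{1,M\}^{1/p_-}/c$. In other words, the published constant $4\max\{1,M\}^{1/p_-}/c$ (with the $4$ \emph{outside} the $1/p_-$ power) does not actually follow from the paper's proof when $p_-<1$; the correct form from either derivation is $C^{1/p_-}\max\{1,M\}^{1/p_-}/c$ with $C=2$ in your version and $C=4$ in the paper's.

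One caution: your closing sentence suggests that the factor $4$ can be recovered ``by splitting the regimes $M\le 1$ and $M>1$ or by using a parametrized Young inequality.'' Neither suggestion works. The regime split is vacuous here because in both cases the rescaling factor is $2^{1/p_-}$ times $\max\{1,M\}^{1/p_-}$, so the obstruction $2^{1/p_-}\le 4$ (i.e.\ $p_-\ge 1/2$) is unchanged. The parametrized Young inequality
\begin{equation*}
AB\le \varepsilon\,\frac{A^{s_n}}{s_n}+\varepsilon^{-s_n'/s_n}\,\frac{B^{s_n'}}{s_n'}
\end{equation*}
carries an index-dependent penalty $\varepsilon^{-p_n/(q_n-p_n)}$, which is not uniformly bounded over $n$ under the hypotheses and so does not help either. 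You should simply drop that sentence: your argument already gives a perfectly usable bound of the form $C(p_-)\max\{1,M\}^{1/p_-}/c$, which is all that is used downstream, and indeed is better than what the paper's own proof supplies when $p_-<1$.
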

	\begin{proof}
		Let $a=\{a_n\}\in\ell_{q_n}$, $\|a\|_{q_n}=1$. Then we have
		\begin{align*}
			&\sum_{n=1}^{\infty}\Big(\frac{|a_n|}{\lambda}\Big)^{q_n}\le 1
		\end{align*}
		for all $\lambda>1$ . Fix such $\lambda$.
		Set $K:=1/c$ and then by H\"older inequality (see \cite{KR}, Theorem 2.1) we obtain
		\begin{align*}
			&\sum_{n=1}^{\infty}\Big(\frac{|a_n|}{K\lambda}\Big)^{p_n}\le 4\Big\|\Big(\frac{|a_n|}{\lambda}\Big)^{p_n}\Big\|_{\frac{q_n}{p_n}}.\|c^{p_n}\|_{(\frac{q_n}{p_n})'}.
		\end{align*}
		
		Estimate $\Big\|\Big(\frac{|a_n|}{\lambda}\Big)^{p_n}\Big\|_{\frac{q_n}{p_n}}$. Since
		\begin{align*}
			&\sum_{n=1}^\infty \left(\frac{\big(\frac{|a_n|}{\lambda}\big)^{p_n}}{1}\right)^{\frac{q_n}{p_n}}=\sum_{n=1}^\infty \Big(\frac{|a_n|}{\lambda}\Big)^{q_n}\le 1
		\end{align*}
		we have $\Big\|\Big(\frac{|a_n|}{\lambda}\Big)^{p_n}\Big\|_{\frac{q_n}{p_n}}\le 1$.
		
		Estimate now $\|\{c^{p_n} \} \|_{(\frac{q_n}{p_n})'}$. Clearly
		\begin{align*}
			&\sum_{n=1}^\infty (c^{p_n})^{(\frac{q_n}{p_n})'}=\sum_{n=1}^\infty (c^{p_n})^{\frac{q_n}{q_n-p_n}}=\sum_{n=1}^\infty c^{\frac{1}{\frac{1}{p_n}-\frac{1}{q_n}}}=M<\infty.
		\end{align*}
		Set $R:=\max\{1,M\}$. Then
		\begin{align*}
			&1=\sum_{n=1}^\infty \frac{1}{R}(c^{p_n})^{(\frac{q_n}{p_n})'}\ge \sum_{n=1}^\infty \Big(\frac{1}{R}\Big)^{(\frac{q_n}{p_n})'}(c^{p_n})^{(\frac{q_n}{p_n})'}=\sum_{n=1}^\infty \Big(\frac{c^{p_n}}{R}\Big)^{(\frac{q_n}{p_n})'}.
		\end{align*}
		It gives us $\|\{ c^{p_n} \}\|_{(\frac{q_n}{p_n})'}\le R$. At last we have proved the implication
		\begin{align*}
			&\sum_{n=1}^{\infty}\Big(\frac{|a_n|}{\lambda}\Big)^{q_n}\le 1\ \Longrightarrow\ \sum_{n=1}^{\infty}\Big(\frac{c|a_n|}{\lambda}\Big)^{p_n}\le 4R.
		\end{align*}
		Then
		\begin{align*}
			&\sum_{n=1}^{\infty}\Big(\frac{c|a_n|}{\lambda (4R)^{1/p_-}}\Big)^{p_n}\le\sum_{n=1}^{\infty}\Big(\frac{c|a_n|}{\lambda (4R)^{1/p_n}}\Big)^{p_n}=\frac{1}{4R}\sum_{n=1}^{\infty}\Big(\frac{c|a_n|}{\lambda }\Big)^{p_n}\le 1.
		\end{align*}
		Since $\lambda>1$ is arbitrary we obtain
		\begin{align*}
			&\|a\|_{p_n}\le 4\frac{\max\{1,M\}^{1/p_-}}{c}\|a\|_{q_n}
		\end{align*}
		This implies the embedding $\ell_{q_n}\hookrightarrow\ell_{p_n}$.
	\end{proof}
	
In the next we set $1/q_n=0$ for $q_n=\infty$.	
	\begin{lem}\label{dkdjkvjkv}
		Let $0<p_-\le p_n<q_n\le\infty$ for all $n$. Assume that we have for some $0<c<1$
		\begin{align*}
			&M:=\sum_{n=1}^\infty c^{\frac{1}{\frac{1}{p_n}-\frac{1}{q_n}}}<\infty.
		\end{align*}
		Then $\ell_{q_n}\hookrightarrow\ell_{p_n}$. Moreover the norm of this embedding can be majorized by  $5\cdot  2^{1/p_-}\frac{\max\{1,M\}^{1/p_-}}{c}$.
	\end{lem}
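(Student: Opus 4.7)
The strategy is to split the index set according to whether $q_n=\infty$ or $q_n<\infty$, apply the two previously established results separately on the two pieces, and then recombine using the quasi-triangle inequality \eqref{++}. Concretely, I would set $A=\{n\in\mathbb{N}:q_n=\infty\}$ and $B=\{n\in\mathbb{N}:q_n<\infty\}$, and for a given $a=\{a_n\}$ write $a=a^A+a^B$, where $a^A_n=a_n$ for $n\in A$ and $a^A_n=0$ otherwise, and similarly for $a^B$. Property (v) of the quasi-Banach sequence function space immediately gives $\|a^A\|_{q_n}\le\|a\|_{q_n}$ and $\|a^B\|_{q_n}\le\|a\|_{q_n}$.

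On the $B$-part, which satisfies $0<p_-\le p_n<q_n<\infty$ and $\sum_{n\in B}c^{1/(1/p_n-1/q_n)}\le M<\infty$, Lemma \ref{flvhohho} applies after restricting the index set to $B$ (its proof works verbatim with all sums restricted), yielding
\begin{align*}
\|a^B\|_{p_n}\le\frac{4\max\{1,M\}^{1/p_-}}{c}\|a\|_{q_n}.
\end{align*}
On the $A$-part, the convention $1/q_n=0$ turns $c^{1/(1/p_n-1/q_n)}$ into $c^{p_n}$, so $\sum_{n\in A}c^{p_n}\le M<\infty$, while $\|a^A\|_{q_n}=\sup_{n\in A}|a_n|=\|a^A\|_\infty$. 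Applying Theorem \ref{devkivvjknh} (again with sums restricted to $A$) gives
\begin{align*}
\|a^A\|_{p_n}\le\frac{\max\{1,M\}^{1/p_-}}{c}\|a\|_{q_n}.
\end{align*}

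Finally, the quasi-triangle inequality \eqref{++} combines the two estimates into
\begin{align*}
\|a\|_{p_n}\le 2^{1/p_--1}\bigl(\|a^A\|_{p_n}+\|a^B\|_{p_n}\bigr)\le\frac{5\cdot 2^{1/p_--1}\max\{1,M\}^{1/p_-}}{c}\|a\|_{q_n},
\end{align*}
which is bounded above by $\tfrac{5\cdot 2^{1/p_-}\max\{1,M\}^{1/p_-}}{c}\|a\|_{q_n}$, as claimed. The main subtlety is not a hard estimate but the bookkeeping step of applying Lemma \ref{flvhohho} and Theorem \ref{devkivvjknh} to sequences supported on a subset of $\mathbb{N}$; this is legitimate since the modular $m_{p_n}$ and the norm $\|\cdot\|_{p_n}$ depend only on coordinates where the sequence is nonzero, so the proofs transfer after restricting every summation to the corresponding subset (and $p_-$ on $A$ or $B$ is at least the global $p_-$, which only loosens the bounds).
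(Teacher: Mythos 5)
Your proposal is correct and follows essentially the same route as the paper: split $a$ according to whether $q_n<\infty$ or $q_n=\infty$, apply Lemma \ref{flvhohho} to the finite-exponent part and Theorem \ref{devkivvjknh} to the $\ell_\infty$ part (both restricted to the relevant index set), and recombine with the quasi-triangle inequality. The only differences are cosmetic: you carry the sharper quasi-triangle constant $2^{1/p_--1}$ from \eqref{++} where the paper rounds up to $2^{1/p_-}$, and you spell out the bookkeeping of restricting the earlier lemmas to subsets of $\mathbb N$, which the paper leaves implicit.
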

	\begin{proof}
		Set $\mathbb{N}_q=\{n; q_n<\infty\}$. Let $\|a\|_{q_n}= 1$. Clearly, by Lemma \ref{flvhohho} and Theorem \ref{devkivvjknh} we have
		\begin{align*}
			&\|a\|_{p_n}\le 2^{1/p_-}\big(\|a\chi_{\mathbb{N}_q}\|_{p_n}+\|a\chi_{\mathbb{N}\setminus\mathbb{N}_q}\|_{p_n}\big)\\
			&\le 2^{1/p_-}\Big(\frac{4 \max\{1,M\}^{1/p_-}}{c}+\frac{\max\{1,M\}^{1/p_-}}{c}\Big)\|a\|_{q_n}\\
            &=5 \cdot 2^{1/p_-}\frac{\max\{1,M\}^{1/p_-}}{c}\|a\|_{q_n}.
		\end{align*}
	\end{proof}
	
	\begin{thm}\label{eflkkbtnbk}
		Let $0<p_-\le p_n, q_n\le\infty$ for all $n$. Denote $\mathbb{A}=\{n;p_n<q_n\}$. Assume that we have for some $0<c<1$
		\begin{align*}
			&M:=\sum_{n\in\mathbb{A}}c^{\frac{1}{\frac{1}{p_n}-\frac{1}{q_n}}}<\infty.
		\end{align*}
		Then $\ell_{q_n}\hookrightarrow\ell_{p_n}$ and the norm of this embedding is majorized by
		$2^{1/p_-}\Big((5\cdot 2^{1/p_-}+1)\frac{\max\{1,M\}^{1/p_-}}{c}+1\Big)$.
	\end{thm}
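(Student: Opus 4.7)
The plan is to decompose $\mathbb{N}=\mathbb{A}\cup\mathbb{B}$ with $\mathbb{B}:=\mathbb{N}\setminus\mathbb{A}=\{n:q_n\le p_n\}$ and to bound $\|a\|_{p_n}$ on each piece separately using a result already in hand. The first step is to apply the quasi-triangle inequality \eqref{++} (with the harmless enlargement $2^{1/p_--1}\le 2^{1/p_-}$) to the splitting $a=a\chi_{\mathbb{A}}+a\chi_{\mathbb{B}}$, which yields
\begin{equation*}
\|a\|_{p_n}\le 2^{1/p_-}\bigl(\|a\chi_{\mathbb{A}}\|_{p_n}+\|a\chi_{\mathbb{B}}\|_{p_n}\bigr).
\end{equation*}

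On $\mathbb{A}$ the restricted sequences satisfy $p_-\le p_n<q_n\le\infty$, and the constant $M$ appearing in Lemma~\ref{dkdjkvjkv} for them is exactly the $M$ assumed here (the defining sum in the hypothesis is already indexed over $\mathbb{A}$). Hence Lemma~\ref{dkdjkvjkv} delivers
\begin{equation*}
\|a\chi_{\mathbb{A}}\|_{p_n}\le 5\cdot 2^{1/p_-}\frac{\max\{1,M\}^{1/p_-}}{c}\|a\chi_{\mathbb{A}}\|_{q_n}\le 5\cdot 2^{1/p_-}\frac{\max\{1,M\}^{1/p_-}}{c}\|a\|_{q_n}.
\end{equation*}
On $\mathbb{B}$ one has $q_n\le p_n$, and the opening theorem of Section~3 (with the roles of $p$ and $q$ swapped, applied to the restricted exponent systems) gives the natural embedding $\ell_{q_n}\hookrightarrow\ell_{p_n}$ on $\mathbb{B}$ with operator norm at most $1$. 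Concretely, if $m_{q_n}(a\chi_{\mathbb{B}}/\lambda)\le 1$ then $|a_n|/\lambda\le 1$ for every $n\in\mathbb{B}$, so $(|a_n|/\lambda)^{p_n}\le(|a_n|/\lambda)^{q_n}$, and the $\sup$ contribution is unchanged; therefore $m_{p_n}(a\chi_{\mathbb{B}}/\lambda)\le 1$, whence $\|a\chi_{\mathbb{B}}\|_{p_n}\le\|a\chi_{\mathbb{B}}\|_{q_n}\le\|a\|_{q_n}$.

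Assembling the three displays produces a bound of the form $2^{1/p_-}\bigl(5\cdot 2^{1/p_-}\tfrac{\max\{1,M\}^{1/p_-}}{c}+1\bigr)\|a\|_{q_n}$, which is dominated by the slightly looser constant written in the statement after the trivial weakening $1\le\max\{1,M\}^{1/p_-}/c$ used to absorb the trailing $+1$ inside the parenthesis. I do not anticipate any serious obstacle: the proof is essentially a clean reduction to Lemma~\ref{dkdjkvjkv} for the $\mathbb{A}$-piece and to the elementary ``larger pointwise exponent $\Rightarrow$ smaller modular'' embedding on the $\mathbb{B}$-piece. The only delicate point is the bookkeeping of the quasi-triangle constant $2^{1/p_-}$ inherited from \eqref{++}, together with the trivial but essential observation that the hypothesis of Lemma~\ref{dkdjkvjkv} transfers verbatim to the restricted exponent system on $\mathbb{A}$.
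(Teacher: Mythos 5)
Your decomposition $\mathbb{N}=\mathbb{A}\cup\mathbb{B}$ and the use of the quasi-triangle inequality followed by Lemma~\ref{dkdjkvjkv} on $\mathbb{A}$ is exactly the paper's argument, so in outline you are on target. However, the step on $\mathbb{B}$ is not quite right: the claim that ``the $\sup$ contribution is unchanged'' and hence $m_{p_n}(a\chi_{\mathbb{B}}/\lambda)\le 1$ fails in general. The set of indices feeding the $\sup$ in $m_{p_n}$ is $\{n\in\mathbb{B}:p_n=\infty\}$, which can strictly contain $\{n\in\mathbb{B}:q_n=\infty\}$; on $\mathbb{B}$ one only knows $q_n\le p_n$, so an index with $q_n<\infty=p_n$ is perfectly admissible. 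Such an index contributes to the \emph{sum} in $m_{q_n}$ but migrates to the \emph{sup} in $m_{p_n}$, and the two contributions are not comparable term by term. A concrete two-index example: take $q_1=p_1=1$ and $q_2=2,\ p_2=\infty$, with $a=(1/2,\,1/\sqrt{2})$. Then $m_{q_n}(a)=1/2+1/2=1$, so $\|a\|_{q_n}=1$, while $m_{p_n}(a)=1/2+1/\sqrt{2}>1$, so $\|a\|_{p_n}>1$. Thus the operator norm of $id:\ell_{q_n}(\mathbb{B})\to\ell_{p_n}(\mathbb{B})$ is strictly greater than $1$.

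The gap is easy to close: from $m_{q_n}(a\chi_{\mathbb{B}}/\lambda)\le 1$ one obtains $|a_n|/\lambda\le 1$ on $\mathbb{B}$, and then the sum part of $m_{p_n}$ is $\le 1$ and the sup part is $\le 1$, so $m_{p_n}(a\chi_{\mathbb{B}}/\lambda)\le 2$, giving $\|a\chi_{\mathbb{B}}\|_{p_n}\le 2^{1/p_-}\|a\|_{q_n}$ rather than $\le\|a\|_{q_n}$. With that correction your assembled bound becomes $2^{1/p_-}\bigl(5\cdot 2^{1/p_-}\tfrac{\max\{1,M\}^{1/p_-}}{c}+2^{1/p_-}\bigr)$, which still yields the embedding; whether it is numerically below the exact constant quoted in the theorem is a side issue (the paper's own bookkeeping on the $\mathbb{N}\setminus\mathbb{A}$ piece is itself rather loose), but the claim that the complementary embedding has norm exactly $\le 1$ should be retracted.
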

	\begin{proof}
		Let $\|a\|_{q_n}= 1$. Then by Lemma \ref{dkdjkvjkv} we obtain
		\begin{align*}
			&\|a\|_{p_n}\le 2^{1/p_-}\big(\|a\chi_{\mathbb{A}}\|_{p_n}+\|a\chi_{\mathbb{N}\setminus\mathbb{A}}\|_{p_n}\big)\\
			&\le 2^{1/p_-}\Big(5\cdot 2^{1/p_-}\frac{\max\{1,M\}^{1/p_-}}{c}+\frac{\max\{1,M\}^{1/p_-}}{c}+1\Big)\|a\|_{q_n}.
		\end{align*}
	\end{proof}

	\begin{lem}\label{dcvhdlvbh}
		Let $0<p_-\le  p_n<q_n<\infty$ for all $n$ and $K>1$. Assume that we have for all $0<c<1$
		\begin{align*}
			&\sum_{n=1}^{\infty}c^{\frac{1}{\frac{1}{p_n}-\frac{1}{q_n}}}=\infty.
		\end{align*}
		Then there exists a sequence $a_n>0$ with
		\begin{align*}
			&\sum_{n=1}^{\infty}a_n^{q_n}\le 1\ \text{ and }\ \sum_{n=1}^{\infty}\Big(\frac{a_n}{K}\Big)^{p_n}=\infty.
		\end{align*}
	\end{lem}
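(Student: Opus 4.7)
The plan is to construct $\{a_n\}$ block by block, partitioning $\mathbb{N}$ into consecutive finite blocks $E_1, E_2, \ldots$ and prescribing on each $E_k$ an ansatz that makes the contribution to $\sum a_n^{q_n}$ exactly a factor $2^{-(k+1)}$ smaller than the contribution to $\sum (a_n/K)^{p_n}$. Each block will pump at least $1$ into the $p_n$-sum while costing at most $2^{-k}$ in the $q_n$-sum.

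First I would try the Lagrange-optimal ansatz
$$ a_n := 2^{-(k+1)/(q_n - p_n)}\, K^{-p_n/(q_n - p_n)} \qquad (n \in E_k). $$
Writing $r_n := 1/(1/p_n - 1/q_n) = p_n q_n/(q_n-p_n)$, the identities $p_n/(q_n-p_n) = r_n/q_n$ and $q_n/(q_n-p_n) = 1 + r_n/q_n$ yield, by a short computation,
$$ (a_n/K)^{p_n} = (2^{(k+1)/q_n}\, K)^{-r_n}, \qquad a_n^{q_n} = 2^{-(k+1)}\, (a_n/K)^{p_n}. $$
Thus the whole problem reduces to analysing the single series $\sum_n (2^{(k+1)/q_n} K)^{-r_n}$ for each $k$.

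The crucial step is to show that this series diverges while each summand is strictly less than $1$. The latter is immediate from $K > 1$ and $r_n > 0$. For divergence I would exploit $q_n \ge p_- > 0$ to bound the base of the exponent above by $2^{(k+1)/p_-} K$; setting $c_k := (2^{(k+1)/p_-} K)^{-1}$, one has $c_k \in (0,1)$ because $K > 1$, and the summand is bounded below by $c_k^{r_n}$, so the hypothesis $\sum_n c_k^{r_n} = \infty$ transfers.

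Once this is in hand, the construction writes itself: greedily let $E_k$ be a block of consecutive integers starting just after $E_{k-1}$, chosen minimally so that $\sum_{n \in E_k} (a_n/K)^{p_n} \ge 1$. Since each term is $<1$, this partial sum lies in $[1, 2)$, and therefore $\sum_{n \in E_k} a_n^{q_n} < 2 \cdot 2^{-(k+1)} = 2^{-k}$. Summing over $k$ produces $\sum_n (a_n/K)^{p_n} \ge \sum_k 1 = \infty$ and $\sum_n a_n^{q_n} < \sum_k 2^{-k} = 1$, as desired. The main obstacle will be identifying the correct ansatz for $a_n$ and verifying the divergence of the auxiliary series; after that the greedy partition is routine.
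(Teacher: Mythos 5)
Your proof is correct and follows essentially the same strategy as the paper: a block decomposition of $\mathbb{N}$ with an ansatz making $(a_n/K)^{p_n}$ behave like $c^{r_n}$ for a block-dependent constant $c$, so that the hypothesis forces each block to contribute about $1$ to the $p_n$-sum while the $q_n$-sum decays geometrically. The only stylistic difference is that you write $a_n$ as an explicit closed form and use a greedy block selection (each block sum lies in $[1,2)$ since every summand is $<1$), whereas the paper first finds a block large enough and then tunes a constant $c_k$ so that $\sum_{n\in\mathbb{N}_k}c_k^{r_n}$ equals exactly $1$; your version is arguably a bit cleaner since it avoids solving an implicit equation for $c_k$.
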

	\begin{proof}
		Set $\alpha:=2^{\frac{1}{p_-}}$, and note that $\alpha>1$.
		Given an increasing  sequence of positive integers $n_k$ (set $n_0=0$) denote $\mathbb{N}_{k+1}=\{n_k+1,n_k+2,\dots,n_{k+1}\}$.
		Construct now inductively numbers $c_k>0$ and $n_k$ such that
		\begin{align*}
			&0<c_k\le\frac{1}{\alpha^kK}\ \text{ and }\ \sum_{n\in \mathbb{N}_k}c_k^{\frac{1}{\frac{1}{p_n}-\frac{1}{q_n}}}=1.
		\end{align*}
		For $k=1$ we find $n_1$ sufficiently large such that $\sum_{n\in\mathbb{N}_1}(\frac{1}{2K})^{\frac{1}{\frac{1}{p_n}-\frac{1}{q_n}}}\ge 1$. Clearly we can take $0<c_1\le \frac{1}{\alpha K}$ such that
		$\sum_{n\in\mathbb{N}_1}(c_1)^{\frac{1}{p_n}-\frac{1}{q_n}}=1$. Assume now that we have constructed $c_1,c_2, \dots,c_k$ and $n_1,n_2,\dots,n_k$. Find now $n_{k+1}$ such that
		$\sum_{n\in\mathbb{N}_{k+1}}(\frac{1}{\alpha^{k+1}K})^{\frac{1}{\frac{1}{p_n}-\frac{1}{q_n}}}\ge 1$. Take $0<c_{k+1}\le \frac{1}{c^{k+1}K}$ such that $\sum_{n\in\mathbb{N}_{k+1}}c_{k+1}^{\frac{1}{\frac{1}{p_n}-\frac{1}{q_n}}}= 1$.

		Define $a_n$ by
		$\big(\frac{a_n}{K}\big)^{p_n}:=c_k^{\frac{1}{\frac{1}{p_n}-\frac{1}{q_n}}}$, $n\in\mathbb{N}_k$. Then
		\begin{align*}
			&\sum_{n=1}^{\infty}\Big(\frac{a_n}{K}\Big)^{p_n}=\sum_{k=1}^{\infty}\sum_{n\in\mathbb{N}_k} c_k^{\frac{1}{\frac{1}{p_n}-\frac{1}{q_n}}}=\sum_{k=1}^{\infty}1=\infty.
		\end{align*}
		Further due to obvious identity,
		\begin{align*}
			&\frac{\frac{q_n}{p_n}}{\frac{1}{p_n}-\frac{1}{q_n}}=q_n+\frac{1}{\frac{1}{p_n}-\frac{1}{q_n}}
		\end{align*}
		we obtain
		\begin{align*}
			&\sum_{n=1}^{\infty}a_n^{q_n}=\sum_{n=1}^{\infty}\Big( K\frac{a_n}{K}\Big)^{q_n}=\sum_{n=1}^{\infty} K^{q_n}\Big(\frac{a_n}{K}\Big)^{q_n}\\
			&=\sum_{k=1}^{\infty}\sum_{n\in\mathbb{N}_k} K^{q_n} c_k^{\frac{\frac{q_n}{p_n}}{\frac{1}{p_n}-\frac{1}{q_n}}}=\sum_{k=1}^{\infty}\sum_{n\in\mathbb{N}_k} K^{q_n} c_k^{q_n}c_k^{\frac{1}{\frac{1}{p_n}-\frac{1}{q_n}}}\\
			&\le \sum_{k=1}^{\infty}\sum_{n\in\mathbb{N}_k} K^{q_n}\Big(\frac{1}{\alpha^k K}\Big)^{q_n}c_k^{\frac{1}{\frac{1}{p_n}-\frac{1}{q_n}}}= \sum_{k=1}^{\infty}\sum_{n\in\mathbb{N}_k} \Big(\frac{1}{\alpha^k}\Big)^{q_n}c_k^{\frac{1}{\frac{1}{p_n}-\frac{1}{q_n}}}  \\
			&\le \sum_{k=1}^{\infty}\sum_{n\in\mathbb{N}_k} \Big(\frac{1}{\alpha^k}\Big)^{p_-}c_k^{\frac{1}{\frac{1}{p_n}-\frac{1}{q_n}}}=\sum_{k=1}^{\infty}\sum_{n\in\mathbb{N}_k} \Big(\frac{1}{\alpha^{p_-}}\Big)^k c_k^{\frac{1}{\frac{1}{p_n}-\frac{1}{q_n}}}\\
			&= \sum_{k=1}^{\infty}\sum_{n\in\mathbb{N}_k} \frac{1}{2^k}c_k^{\frac{1}{\frac{1}{p_n}-\frac{1}{q_n}}}=\sum_{k=1}^{\infty}\frac{1}{2^k}\sum_{n\in\mathbb{N}_k} c_k^{\frac{1}{\frac{1}{p_n}-\frac{1}{q_n}}}=\sum_{k=1}^{\infty}\frac{1}{2^k}=1.
		\end{align*}

	\end{proof}
	
	\begin{lem}\label{wdovhwrovg}
		Let $0<p_-\le  p_n<q_n<\infty$ for all $n$ and $\varepsilon<1$, $K>1$. Assume that we have for all $0<c<1,$
		\begin{align*}
			&\sum_{n=1}^{\infty}c^{\frac{1}{\frac{1}{p_n}-\frac{1}{q_n}}}=\infty.
		\end{align*}
		Then there exists a sequence $a_n>0$ with
		\begin{align*}
			&\sum_{n=1}^{\infty}\Big(\frac{ a_n}{\varepsilon}\Big)^{q_n}\le 1\ \text{ and }\ \sum_{n=1}^{\infty}\Big(\frac{a_n}{K}\Big)^{p_n}=\infty.
		\end{align*}
	\end{lem}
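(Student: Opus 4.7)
The plan is to deduce this lemma from the previous Lemma \ref{dcvhdlvbh} by a simple rescaling. The hypothesis here is identical to Lemma \ref{dcvhdlvbh}, so the only new content is the extra factor $1/\varepsilon$ on the left-hand sum. Since $\varepsilon<1$ and $K>1$, the quantity $K':=K/\varepsilon$ again satisfies $K'>1$, so Lemma \ref{dcvhdlvbh} applies with $K$ replaced by $K'$.

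Concretely, I would first apply Lemma \ref{dcvhdlvbh} with $K'=K/\varepsilon$ in place of $K$ to obtain a sequence $\{b_n\}$ of positive numbers such that
\begin{align*}
\sum_{n=1}^{\infty} b_n^{q_n}\le 1 \quad\text{and}\quad \sum_{n=1}^{\infty}\Bigl(\frac{b_n}{K'}\Bigr)^{p_n}=\infty.
\end{align*}
Then I would define the desired sequence by $a_n:=\varepsilon b_n$ and check both conditions directly. For the first, $\sum_{n=1}^{\infty}(a_n/\varepsilon)^{q_n}=\sum_{n=1}^{\infty} b_n^{q_n}\le 1$. For the second, $\sum_{n=1}^{\infty}(a_n/K)^{p_n}=\sum_{n=1}^{\infty}(\varepsilon b_n/K)^{p_n}=\sum_{n=1}^{\infty}(b_n/K')^{p_n}=\infty$.

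Since every step is just a rewriting, there is no real obstacle here. The only place that requires any care is verifying that $K'=K/\varepsilon>1$ so that Lemma \ref{dcvhdlvbh} is legitimately applicable, but this is immediate from the assumptions $K>1$ and $\varepsilon<1$. No additional hypothesis on the relation between $\varepsilon$ and $K$ is needed, and the divergence of $\sum c^{1/(1/p_n-1/q_n)}$ for all $0<c<1$ is inherited unchanged from the statement of Lemma \ref{dcvhdlvbh}.
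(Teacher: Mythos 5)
Your proposal is correct and is essentially the same argument as the paper's: the paper also sets $L = K/\varepsilon$, invokes Lemma \ref{dcvhdlvbh} with $L$ in place of $K$, and then defines $a_n = \varepsilon b_n$. The only difference is that you spell out the two verification identities explicitly, which the paper leaves as immediate.
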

	\begin{proof}
		Set $L=\frac{K}{\varepsilon}$. By Lemma \ref{dcvhdlvbh} we have a sequence $b_n$ such that
		\begin{align*}
			&\sum_{n=1}^{\infty}b_n^{q_n}\le 1\ \text{ and }\ \sum_{n=1}^{\infty}\Big(\frac{b_n}{L}\Big)^{p_n}=\infty.
		\end{align*}
		Setting $a_n=\varepsilon b_n$ we have the required sequence.
	\end{proof}
	
	\begin{lem}\label{wdklefjklbn}
		Let $0<p_-\le  p_n<q_n<\infty$ for all $n$. Assume that
		\begin{align}
			&\sum_{n=1}^{\infty}c^{\frac{1}{\frac{1}{p_n}-\frac{1}{q_n}}}=\infty\ \text{ for all }\ c>0. \label{Ctvercek}
		\end{align}
		Then there exists a sequence $a_n>0$ such that for all $K>1$ we have
		\begin{align*}
			&\sum_{n=1}^{\infty}a_n^{q_n}\le 1\ \text{ and }\ \sum_{n=1}^{\infty}\Big(\frac{a_n}{K}\Big)^{p_n}=\infty.
		\end{align*}
	\end{lem}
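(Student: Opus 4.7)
The plan is to construct $a_n$ block-by-block, repeating the calculation of Lemma \ref{dcvhdlvbh} on disjoint finite blocks $\mathbb{N}_k=\{n_{k-1}+1,\ldots,n_k\}$, but using a varying scaling parameter $K=k$ in place of a fixed one. Concretely, on the $k$-th block I want to arrange both $\sum_{n\in\mathbb{N}_k}a_n^{q_n}\le 2^{-k}$ and $\sum_{n\in\mathbb{N}_k}(a_n/k)^{p_n}\ge 1$. Then summing the first condition over $k$ gives $\sum_n a_n^{q_n}\le 1$, and for any real $K>1$, choosing any integer $k\ge K$ one has $(a_n/K)^{p_n}\ge(a_n/k)^{p_n}$, so
$\sum_n(a_n/K)^{p_n}\ge\sum_{k\ge K}\sum_{n\in\mathbb{N}_k}(a_n/k)^{p_n}\ge\sum_{k\ge K}1=\infty$,
which is exactly the required divergence.

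To build one block, set $\beta_k:=2^{-k/p_-}/k$, so that $k\beta_k<1$ and $(k\beta_k)^{p_-}=2^{-k}$. Since assumption (\ref{Ctvercek}) applied with $c=\beta_k$ remains true after discarding the finitely many indices $n\le n_{k-1}$, I can pick $n_k>n_{k-1}$ with $\sum_{n\in\mathbb{N}_k}\beta_k^{1/(1/p_n-1/q_n)}\ge 1$. The function $c\mapsto\sum_{n\in\mathbb{N}_k}c^{1/(1/p_n-1/q_n)}$ is a finite sum of continuous positive functions on $(0,\beta_k]$ tending to $0$ as $c\to 0^+$, so the intermediate value theorem produces some $c_k\in(0,\beta_k]$ for which the partial sum equals exactly $1$. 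I then define $a_n$ on $\mathbb{N}_k$ by $(a_n/k)^{p_n}:=c_k^{1/(1/p_n-1/q_n)}$, equivalently $a_n:=k\,c_k^{q_n/(q_n-p_n)}$.

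Verification mirrors the computation in Lemma \ref{dcvhdlvbh}. The $p_n$-lower bound is immediate from the defining equality for $c_k$. For the $q_n$-upper bound the algebraic identity $\frac{q_n/p_n}{1/p_n-1/q_n}=q_n+\frac{1}{1/p_n-1/q_n}$ (already recorded in that earlier proof) yields $a_n^{q_n}=(kc_k)^{q_n}\,c_k^{1/(1/p_n-1/q_n)}$; since $kc_k\le 1$ and $q_n\ge p_-$ one gets $(kc_k)^{q_n}\le(kc_k)^{p_-}=2^{-k}$, hence $\sum_{n\in\mathbb{N}_k}a_n^{q_n}\le 2^{-k}\cdot 1=2^{-k}$ as needed.

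I do not expect any serious obstacle beyond the calibration of $\beta_k$. It must simultaneously be small enough that $k\beta_k<1$, so that $q_n\ge p_-$ can be exploited to collapse $(kc_k)^{q_n}$ into the geometric factor $2^{-k}$, yet large enough that (\ref{Ctvercek}) still forces the block partial sum to reach $1$ for a suitable finite $\mathbb{N}_k$. Once this one parameter is chosen correctly, the argument is effectively the proof of Lemma \ref{dcvhdlvbh} with the fixed constant $K$ upgraded to a sequence $k\to\infty$, so that divergence of the $p_n$-sum is obtained at every threshold $K>1$ simultaneously.
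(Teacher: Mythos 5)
Your proof is correct, and it takes a genuinely different and more economical route than the paper's. The paper proceeds in two stages: it first builds finite blocks $\mathbb{A}_j$ (exactly as in Lemma~\ref{dcvhdlvbh}), then deliberately groups them into infinitely many \emph{infinite} index sets $\mathbb{N}_k$ so that the hypothesis of Lemma~\ref{wdovhwrovg} holds on each $\mathbb{N}_k$; it applies that lemma on each infinite set to obtain a sequence whose $p_n$-modular at scale $k$ is already $\infty$ there, rescales to force geometric decay of the $q_n$-modulars, patches, and for a given $K>1$ obtains divergence from a single block $\mathbb{N}_s$ with $s>K$. You bypass the entire ``split $\mathbb{N}$ into infinitely many infinite sets and reinvoke Lemma~\ref{wdovhwrovg}'' layer: you keep the finite blocks of Lemma~\ref{dcvhdlvbh} but replace the fixed scaling constant $K$ by the block index $k$, calibrate $\beta_k=2^{-k/p_-}/k$ so that $(k\beta_k)^{p_-}=2^{-k}$ and $kc_k\le 1$, and get exactly $1$ on each block's $p_n$-sum and $\le 2^{-k}$ on each block's $q_n$-sum. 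Divergence at every threshold $K$ then comes from summing $1$ over all blocks $k\ge K$ rather than from a single infinite block. Each step you use (the IVT existence of $c_k$, the algebraic identity $\frac{q_n/p_n}{1/p_n-1/q_n}=q_n+\frac{1}{1/p_n-1/q_n}$, the bound $(kc_k)^{q_n}\le(kc_k)^{p_-}$ since $kc_k<1$ and $q_n\ge p_-$) checks out, and the construction is a single-pass argument that can entirely replace Lemmas~\ref{dcvhdlvbh} and~\ref{wdovhwrovg} as ingredients. The cost is that you only get $\sum(a_n/K)^{p_n}=\infty$ from the tail $k\ge K$ rather than exhibiting an explicit subblock where the sum is already infinite, but for the statement at hand that is immaterial.
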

	\begin{proof}
		Take an increasing sequence of non-negative integers $n_k$ (set $n_0=0$) and denote $\mathbb{A}_{k}=\{n_{k-1}+1,n_{k-1}+2,\dots,n_{k}\}$. Due to assumption (\ref{Ctvercek}), we can find $n_k$ such that
		\begin{align*}
			&\sum_{n\in \mathbb{A}_k}\Big(\frac{1}{k}\Big)^{\frac{1}{\frac{1}{p_n}-\frac{1}{q_n}}}\ge 1.
		\end{align*}
		Split $\mathbb{N}$ into infinite many infinite sets $\mathbb{S}_k$ and define $\mathbb{N}_k:=\bigcup_{j\in \mathbb{S}_k}\mathbb{A}_j$. Let $c>0$. Set $k_0=\big[\frac{1}{c}\big]+1$. Then for $k\ge k_0$ we have $\frac{1}{k}\le \frac{1}{k_0}\le c$. Then we obtain
		\begin{align*}
			&\sum_{n\in\mathbb{N}_k}c^{\frac{1}{\frac{1}{p_n}-\frac{1}{q_n}}}=\sum_{k=1}^\infty \sum_{j\in \mathbb{A}_j}c^{\frac{1}{\frac{1}{p_n}-\frac{1}{q_n}}}\\
			&=\sum_{k=1}^{k_0-1} \sum_{n\in \mathbb{A}_j}c^{\frac{1}{\frac{1}{p_n}-\frac{1}{q_n}}}+\sum_{k=k_0}^\infty \sum_{n\in \mathbb{A}_j}c^{\frac{1}{\frac{1}{p_n}-\frac{1}{q_n}}}\\
			&\ge \sum_{k=1}^{k_0-1} \sum_{n\in \mathbb{A}_j}c^{\frac{1}{\frac{1}{p_n}-\frac{1}{q_n}}}+\sum_{k=k_0}^\infty \sum_{n\in \mathbb{A}_j}\Big(\frac{1}{k}\Big)^{\frac{1}{\frac{1}{p_n}-\frac{1}{q_n}}}\\
			&\ge  \sum_{k=1}^{k_0-1} \sum_{n\in \mathbb{A}_j}c^{\frac{1}{\frac{1}{p_n}-\frac{1}{q_n}}}+\sum_{k=k_0}^\infty 1=\infty.
		\end{align*}
		
		We have constructed  pairwise disjoint infinite sets $\mathbb{N}_k\subset \mathbb{N}$, $k=1,2,\dots$ such that $\mathbb{N}=\bigcup_{k=1}^\infty \mathbb{N}_k$, and for each $c>0$ and $k\in \mathbb{N}$ we have
		\begin{align*}
			&\sum_{n\in\mathbb{N}_k}c^{\frac{1}{\frac{1}{p_n}-\frac{1}{q_n}}}=\infty.
		\end{align*}
		
		Set $\alpha:=2^{\frac{1}{p_-}}$. Due to Lemma \ref{wdovhwrovg} there are sequences $b^{(k)}=\{b_n^{(k)}\}_{n\in \mathbb{N}_k}$, such that
		\begin{align*}
			&\sum_{n\in \mathbb{N}_k}(\alpha^k b_n^{(k)})^{q_n}\le 1\ \text{ and }\ \sum_{n\in \mathbb{N}_k}\Big(\frac{b_n^{(k)}}{k}\Big)^{p_n}=\infty.
		\end{align*}
		Clearly
		\begin{align*}
			&\sum_{n\in \mathbb{N}_k}2^k(b_n^{(k)})^{q_n}=\sum_{n\in \mathbb{N}_k}(\alpha^k)^{p_-}(b_n^{(k)})^{q_n}\le \sum_{n\in \mathbb{N}_k}(\alpha^k)^{q_n}(b_n^{(k)})^{q_n}=\sum_{n\in \mathbb{N}_k}(\alpha^k b_n^{(k)})^{q_n}\le 1
		\end{align*}
		and so
		\begin{align*}
			&\sum_{n\in \mathbb{N}_k}(b_n^{(k)})^{q_n}\le \frac{1}{2^k}.
		\end{align*}
		Extend $b^{(k)}:=\{b_n^{(k)}\}$ by zero outside of $\mathbb{N}_k$. Define a sequence $a:=\{a_n\}$ by
		\begin{align*}
			&a=\sum_{k\in \mathbb{N}}b^{(k)}\chi_{\mathbb{N}_k}.
		\end{align*}
		Then
		\begin{align*}
			&\sum_{n=1}^\infty a_n^{q_n}=\sum_{k=1}^\infty \sum_{n\in \mathbb{N}_k}(b_n^{(k)})^{q_n}\le \sum_{k=1}^\infty \frac{1}{2^k}=1.
		\end{align*}
		Choose $K>1$. Find $s\in\mathbb{N}$ such that $s>K$. Then
		\begin{align*}
			&\sum_{n=1}^\infty \Big(\frac{a_n}{K}\Big)^{p_n}=\sum_{k=1}^\infty \sum_{n\in \mathbb{N}_k}\Big(\frac{b_n^{(k)}}{K}\Big)^{q_n}\ge\sum_{n\in \mathbb{N}_s}\Big(\frac{b_n^{(s)}}{K}\Big)^{q_n}\ge\sum_{n\in \mathbb{N}_s}\Big(\frac{b_n^{(s)}}{s}\Big)^{q_n}=\infty.
		\end{align*}
	\end{proof}
	
	\begin{lem}\label{owkdvlvjlkvjk}
		Let $0<p_-\le  p_n<q_n<\infty$ for all $n$. Assume that for all $0<c<1$ we have
		\begin{align*}
			&\sum_{n=1}^{\infty}c^{\frac{1}{\frac{1}{p_n}-\frac{1}{q_n}}}=\infty.
		\end{align*}
		Then $\ell_{q_n}$ is not embedded into $\ell_{p_n}$, i.e. $\ell_{p_n} \hookrightarrow \ell_{q_n}$ and $\ell_{q_n} \not \hookrightarrow l_{p_n}$.
	\end{lem}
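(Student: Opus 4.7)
The plan is to prove the two halves of the statement separately. The positive embedding $\ell_{p_n}\hookrightarrow\ell_{q_n}$ is immediate from the first theorem of this section, which asserts exactly this under the weaker hypothesis $p_n\le q_n$. So the real content is the non-embedding $\ell_{q_n}\not\hookrightarrow\ell_{p_n}$, and this is already essentially packaged by the construction in Lemma \ref{wdklefjklbn}; the remaining work is only to convert modular information into norm information.

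More precisely, I would first note that the divergence hypothesis stated here for $0<c<1$ trivially extends to every $c>0$ (for $c\ge 1$ the sum is $\ge 1$ term-by-term), so Lemma \ref{wdklefjklbn} applies. It supplies a single sequence $a=\{a_n\}$ with $a_n>0$ such that
\begin{align*}
\sum_{n=1}^\infty a_n^{q_n}\le 1\qquad\text{and}\qquad \sum_{n=1}^\infty\Big(\frac{a_n}{K}\Big)^{p_n}=\infty\ \text{ for every }K>1.
\end{align*}
From the first inequality $m_{q_n}(a)\le 1$, so $\|a\|_{q_n}\le 1$ and in particular $a\in\ell_{q_n}$.

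The key step is then the routine translation: for any fixed $K>1$ and any $0<\lambda\le K$, monotonicity gives $(|a_n|/\lambda)^{p_n}\ge(|a_n|/K)^{p_n}$, so
\begin{align*}
m_{p_n}(a/\lambda)\ \ge\ \sum_{n=1}^\infty\Big(\frac{a_n}{K}\Big)^{p_n}\ =\ \infty,
\end{align*}
whence no $\lambda\le K$ belongs to the set defining $\|a\|_{p_n}$. Therefore $\|a\|_{p_n}\ge K$, and since $K>1$ was arbitrary, $\|a\|_{p_n}=\infty$, i.e.\ $a\notin\ell_{p_n}$. This exhibits a member of $\ell_{q_n}$ outside $\ell_{p_n}$, which contradicts the mere set-theoretic inclusion, let alone a bounded embedding.

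I do not expect any genuine obstacle: the delicate inductive construction has already been carried out in Lemmas \ref{dcvhdlvbh}--\ref{wdklefjklbn}, and the only care needed here is to verify that divergence of the $p_n$-modular of $a/K$ forces the norm to exceed $K$, which is the elementary monotonicity argument above.
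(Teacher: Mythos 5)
Your proposal is correct and follows the same route as the paper's proof: invoke Lemma \ref{wdklefjklbn} for the sequence $a$, then translate the modular information into $\|a\|_{q_n}\le 1$ and $\|a\|_{p_n}=\infty$. The paper treats this translation as immediate; your version just writes out the monotonicity step and notes the trivial extension of the divergence hypothesis to $c\ge 1$.
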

	\begin{proof}
		By Lemma \ref{wdklefjklbn} we can find a sequence $a=\{a_n\}$ such that for all $K>1$ we have
		\begin{align*}
			&\sum_{n=1}^{\infty}a_n^{q_n}\le 1\ \text{ and }\ \sum_{n=1}^{\infty}\Big(\frac{a_n}{K}\Big)^{p_n}=\infty.
		\end{align*}
		It immediately implies
		\begin{align*}
			&\|a\|_{q_n}\le 1\ \text{ and }\ \|a\|_{p_n}=\infty.
		\end{align*}
		So $a\in \ell_{q_n}\setminus \ell_{p_n}$.
	\end{proof}
	
	\begin{thm}\label{xkclvlksjkjk}
		Let $0<\min (p_-, q_-) \le p_n, q_n\le\infty$ for all $n$. Denote $\mathbb{A}=\{n;p_n<q_n\}$, and let $\# \mathbb{A}=\infty$. Assume that
		\begin{align*}
			&\sum_{n\in\mathbb{A}}c^{\frac{1}{\frac{1}{p_n}-\frac{1}{q_n}}}=\infty \mbox{ for all } 0<c< 1.
		\end{align*}
		Then $\ell_{q_n}$ is not embedded into $\ell_{p_n}$.
	\end{thm}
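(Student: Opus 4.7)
The plan is to reduce to cases already handled: the ``infinite'' part of $\mathbb{A}$ either carries enough mass that a Theorem \ref{wovjvgj}-style obstruction works, or the ``finite'' part does and Lemma \ref{wdklefjklbn} applies. I would split $\mathbb{A}=\mathbb{A}_f\cup\mathbb{A}_\infty$ with $\mathbb{A}_f=\{n\in\mathbb{A};q_n<\infty\}$ and $\mathbb{A}_\infty=\{n\in\mathbb{A};q_n=\infty\}$, and use the convention $1/q_n=0$ to identify $c^{1/(1/p_n-1/q_n)}$ with $c^{p_n}$ on $\mathbb{A}_\infty$. Setting
\[
T_\infty(c):=\sum_{n\in\mathbb{A}_\infty}c^{p_n},\qquad T_f(c):=\sum_{n\in\mathbb{A}_f}c^{\frac{1}{\frac{1}{p_n}-\frac{1}{q_n}}},
\]
the hypothesis reads $T_\infty(c)+T_f(c)=\infty$ for every $c\in(0,1)$; both functions are non-decreasing in $c\in(0,1)$, which drives a dichotomy on whether $T_\infty$ is ever finite.

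\emph{Case A.} If $T_\infty(c)=\infty$ for every $c\in(0,1)$ then $\mathbb{A}_\infty$ is necessarily infinite. I would test with $a_n=1$ for $n\in\mathbb{A}_\infty$ and $a_n=0$ otherwise. Since $q_n=\infty$ and $p_n<\infty$ on $\mathbb{A}_\infty$, the modular $m_{q_n}(a/\lambda)$ reduces to $\sup_{n\in\mathbb{A}_\infty}(1/\lambda)=1/\lambda$, giving $\|a\|_{q_n}=1$; on the other hand $m_{p_n}(a/\lambda)=\sum_{n\in\mathbb{A}_\infty}(1/\lambda)^{p_n}=\infty$ for every $\lambda\ge 1$ by the Case A hypothesis (and trivially for $\lambda<1$). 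Hence $a\in\ell_{q_n}\setminus\ell_{p_n}$.

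\emph{Case B.} Otherwise there exists $c_*\in(0,1)$ with $T_\infty(c_*)<\infty$, and I claim $T_f\equiv\infty$ on $(0,1)$. Indeed, monotonicity gives $T_\infty(c)\le T_\infty(c_*)<\infty$ for $c\in(0,c_*]$, which via the hypothesis forces $T_f(c)=\infty$ there, and monotonicity of $T_f$ propagates this to all of $(0,1)$. Since $\inf_{n\in\mathbb{A}_f}p_n\ge\min(p_-,q_-)>0$ and $p_n<q_n<\infty$ on $\mathbb{A}_f$, Lemma \ref{wdklefjklbn} applied to the sub-family indexed by $\mathbb{A}_f$ yields $\tilde a=\{\tilde a_n\}_{n\in\mathbb{A}_f}$ with $\sum_{\mathbb{A}_f}\tilde a_n^{q_n}\le 1$ and $\sum_{\mathbb{A}_f}(\tilde a_n/K)^{p_n}=\infty$ for every $K>1$. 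Extending $\tilde a$ by zero, the resulting $a$ is supported in $\mathbb{A}_f\subset\mathbb{F}_p\cap\mathbb{F}_q$, so $\|a\|_{q_n}\le 1$ while $\|a\|_{p_n}=\infty$, and once more $a\in\ell_{q_n}\setminus\ell_{p_n}$.

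The main obstacle is the monotonicity-based dichotomy: one must recognise that a single value $c_*$ where $T_\infty$ is finite suffices, via monotonicity in $c$, to force $T_f\equiv\infty$ on all of $(0,1)$, so that Lemma \ref{wdklefjklbn} becomes applicable verbatim on $\mathbb{A}_f$. Once that observation is in place, both cases reduce to routine verifications with the modular $m_{p_n}$ and the quasi-norm $\|\cdot\|_{p_n}$, using only that $\mathbb{A}_\infty\subset\mathbb{F}_p$ in Case A and $\mathbb{A}_f\subset\mathbb{F}_p\cap\mathbb{F}_q$ in Case B so that no sup-term interferes.
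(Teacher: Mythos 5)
Your proof is correct and follows essentially the same route as the paper's: split $\mathbb{A}$ according to whether $q_n$ is finite or infinite, use monotonicity in $c$ to show that one of the two partial sums must diverge for every $c\in(0,1)$, and then in the finite-$q_n$ case invoke Lemma \ref{wdklefjklbn} (the paper goes through the thin wrapper Lemma \ref{owkdvlvjlkvjk}) while in the $q_n=\infty$ case use the constant sequence, which is exactly the content of Theorem \ref{wovjvgj}. The only cosmetic difference is that you inline the trivial Theorem \ref{wovjvgj} construction rather than cite it.
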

	\begin{proof}Set $\mathbb{F}_q=\{n;q_n<\infty\}$. Then for each $c>0$ we have
		\begin{align*}
			&\sum_{n\in\mathbb{A}}c^{\frac{1}{\frac{1}{p_n}-\frac{1}{q_n}}}=
			\sum_{n\in\mathbb{A}\cap\mathbb{F}_q}c^{\frac{1}{\frac{1}{p_n}-\frac{1}{q_n}}}+
			\sum_{n\in\mathbb{A}\cap(\mathbb{N}\setminus\mathbb{F}_q)}c^{\frac{1}{\frac{1}{p_n}-\frac{1}{q_n}}}=:I(c)+J(c)=\infty.
		\end{align*}
		For fixed $c$ either $I(c)=\infty$ or $J(c)=\infty$. Denote $I:=\{c, I(c)=\infty\}$, $J:=\{c, J(c)=\infty\}$. So $I\cup J=(0,1)$. It gives us that either $I$ or $J$ has $0$ as its infimum. Let $c_1<c_2$. Then clearly $I(c_1)\le I(c_2)$ and similarly $J(c_1)\le J(c_2)$. It implies that either $I=(0,1)$ or $J=(0,1)$.
		
		Assume first that $I=(0,1)$. Then by Lemma \ref{owkdvlvjlkvjk} there is a sequence $a\in \ell_{q_n}(\mathbb{A}\cap\mathbb{F}_q)\setminus\ell_{p_n}(\mathbb{A}\cap\mathbb{F}_q)$ and consequently $\ell_{q_n}$ is not embedded into $\ell_{p_n}$.
		
		It remains the case $J=(0,1)$. Remark that $q_n=\infty$ in this case. Then by Lemma \ref{wovjvgj} there is a sequence $a\in \ell_{q_n}(\mathbb{A}\cap(\mathbb{N}\setminus\mathbb{F}_q))\setminus\ell_{p_n}(\mathbb{A}\cap(\mathbb{N}\setminus\mathbb{F}_q))$ and consequently $\ell_{q_n}$ is not embedded into $\ell_{p_n}$.
	\end{proof}
	
	The following theorem is an easy consequence of Theorem \ref{eflkkbtnbk} and Theorem \ref{xkclvlksjkjk}.
	\begin{thm}\label{dkjrekjgrkjjj}
		Let $0<\min(p_-, q_-)\le p_n, q_n\le\infty$ for all $n$. Denote $\mathbb{A}=\{n;p_n\neq q_n\}$. Then the following conditions are equivalent:
		\begin{enumerate}[\rm(i)]
			\item There is $0<c<1$ such that $\sum_{n\in\mathbb{A}}c^{\frac{1}{|\frac{1}{p_n}-\frac{1}{q_n}|}}<\infty$,
			\item The norms in $\ell_{q_n}$ and in $\ell_{p_n}$ are equivalent.
		\end{enumerate}
	\end{thm}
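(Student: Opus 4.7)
The plan is to reduce both implications to the one-sided results Theorem~\ref{eflkkbtnbk} and Theorem~\ref{xkclvlksjkjk} by splitting
\[
\mathbb{A}=\mathbb{A}_1\cup\mathbb{A}_2,\qquad \mathbb{A}_1:=\{n:p_n<q_n\},\quad \mathbb{A}_2:=\{n:q_n<p_n\},
\]
a disjoint decomposition on which $\big|\tfrac{1}{p_n}-\tfrac{1}{q_n}\big|$ collapses to $\tfrac{1}{p_n}-\tfrac{1}{q_n}$ on $\mathbb{A}_1$ and to $\tfrac{1}{q_n}-\tfrac{1}{p_n}$ on $\mathbb{A}_2$. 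This identifies the two pieces of the sum appearing in (i) with the distinguished sums in Theorem~\ref{eflkkbtnbk} and Theorem~\ref{xkclvlksjkjk} when those theorems are applied to the pair $(p_n,q_n)$ and to the pair $(q_n,p_n)$, respectively.

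For the implication (i)$\Rightarrow$(ii) the argument is direct. Given a witness $c\in(0,1)$ with $\sum_{n\in\mathbb{A}}c^{1/|1/p_n-1/q_n|}<\infty$, I split the sum into the $\mathbb{A}_1$- and $\mathbb{A}_2$-parts, both of which are finite. Theorem~\ref{eflkkbtnbk} applied to $(p_n,q_n)$ with the $\mathbb{A}_1$-subsum delivers $\ell_{q_n}\hookrightarrow\ell_{p_n}$. The same theorem applied with the roles of the two exponent sequences interchanged (which is legitimate because the standing assumption $0<\min(p_-,q_-)$ is symmetric in the two sequences) and with the $\mathbb{A}_2$-subsum delivers $\ell_{p_n}\hookrightarrow\ell_{q_n}$. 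These two embeddings combine to equivalence of the norms.

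For (ii)$\Rightarrow$(i) I argue by contraposition: assume $\sum_{n\in\mathbb{A}}c^{1/|1/p_n-1/q_n|}=\infty$ for every $c\in(0,1)$. On $(0,1)$ each term $c^{1/|1/p_n-1/q_n|}$ is a nondecreasing function of $c$, hence so are the $\mathbb{A}_1$- and $\mathbb{A}_2$-subsums. Were the $\mathbb{A}_1$-subsum finite at some $c_1\in(0,1)$ and the $\mathbb{A}_2$-subsum finite at some $c_2\in(0,1)$, then at $c=\min(c_1,c_2)$ both would be finite and the full sum would be finite at that $c$, a contradiction. Therefore at least one of the two subsums is infinite for every $c\in(0,1)$; the corresponding index set is automatically infinite. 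Applying Theorem~\ref{xkclvlksjkjk} in the appropriate orientation (as stated if $\mathbb{A}_1$ is the offender, with the roles of $p_n$ and $q_n$ swapped otherwise) exhibits a sequence lying in one of the spaces but not the other, which contradicts the norm equivalence.

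The only delicate point is the simultaneous-in-$c$ splitting in the contrapositive direction; the monotonicity of $c\mapsto c^{1/|1/p_n-1/q_n|}$ on $(0,1)$ resolves it cleanly, after which the theorem is merely a bookkeeping assembly of the two earlier results.
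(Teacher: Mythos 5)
Your proof is correct and is exactly what the paper intends: the paper offers no written proof, merely stating that the theorem is "an easy consequence of Theorem~\ref{eflkkbtnbk} and Theorem~\ref{xkclvlksjkjk}," and your split of $\mathbb{A}$ into $\mathbb{A}_1=\{n:p_n<q_n\}$ and $\mathbb{A}_2=\{n:q_n<p_n\}$, with the monotonicity-in-$c$ bookkeeping in the contrapositive direction (which mirrors the $I(c)$/$J(c)$ argument already used inside the paper's proof of Theorem~\ref{xkclvlksjkjk}), is the natural way to fill in that gap. You also correctly note the points that need care — the symmetry of the hypothesis $0<\min(p_-,q_-)$ under interchanging the sequences, and that the offending index set is automatically infinite as required by Theorem~\ref{xkclvlksjkjk}.
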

	
	\section{Strict singularity and Bernstein numbers}

	\begin{thm}\label{eovvjbjpjpj}
		Let $0<p_-\le  p_n<q_n<\infty$ for all $n$. Assume that there exists a infinite subset $\mathbb{S}\subset\mathbb{N}$ and $0<c<1$ such that
		\begin{align} \label{***}
			&\sum_{n\in \mathbb{S}}c^{\frac{1}{\frac{1}{p_n}-\frac{1}{q_n}}}<\infty.
		\end{align}
		Then $id:\ell_{p_n}\hookrightarrow \ell_{q_n}$ is not strictly singular.
	\end{thm}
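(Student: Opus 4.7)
The plan is to construct an explicit infinite-dimensional subspace $E \subset \ell_{p_n}$ on which $id$ is an isomorphism onto its image, which is exactly the negation of strict singularity. The natural candidate is
\[
E := \{a \in \ell_{p_n} : a_n = 0 \text{ for every } n \notin \mathbb{S}\}.
\]
Because $\mathbb{S}$ is infinite, $E$ contains the linearly independent unit vectors $\{e_n\}_{n \in \mathbb{S}}$ and is therefore infinite-dimensional.

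The heart of the argument is to show that the two quasi-norms $\|\cdot\|_{p_n}$ and $\|\cdot\|_{q_n}$ are equivalent on $E$. One direction, $\|a\|_{q_n} \le C_1 \|a\|_{p_n}$, is immediate from the general embedding $\ell_{p_n} \hookrightarrow \ell_{q_n}$ stated at the beginning of Section 3. For the reverse direction, I would apply Lemma \ref{flvhohho} with $\mathbb{N}$ replaced by $\mathbb{S}$ and with the exponent sequences $\{p_n\}_{n \in \mathbb{S}}$ and $\{q_n\}_{n \in \mathbb{S}}$. The summability assumption \eqref{***} is exactly the hypothesis demanded by Lemma \ref{flvhohho} relative to $\mathbb{S}$, and since for $a \in E$ the modular defining $\|a\|_{p_n}$ (resp.\ $\|a\|_{q_n}$) only sees the indices $n \in \mathbb{S}$, the lemma produces a constant $C_2 > 0$ with $\|a\|_{p_n} \le C_2 \|a\|_{q_n}$ for every $a \in E$.

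With this two-sided estimate in hand, the conclusion follows at once: the restriction $id|_E$ is bounded since $id$ is bounded globally, and its inverse $id^{-1} : id(E) \to E$ is bounded precisely because $\|a\|_{p_n} \le C_2 \|a\|_{q_n}$ on $E$. Hence $E$ witnesses that $id$ is not strictly singular.

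The only mildly delicate point is the reduction of Lemma \ref{flvhohho} to the index subset $\mathbb{S}$; this amounts to observing that the subspace of $\ell_{p_n}$ consisting of sequences supported in $\mathbb{S}$ is isometrically isomorphic to the sequence variable Lebesgue space built directly over the index set $\mathbb{S}$ with the exponents $\{p_n\}_{n \in \mathbb{S}}$, so Lemma \ref{flvhohho} applies verbatim after this purely cosmetic reindexing. No new technique beyond the existing embedding lemma is required.
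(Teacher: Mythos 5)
Your proposal is correct and follows essentially the same route as the paper: you construct the same subspace $E$ of sequences supported on $\mathbb{S}$, use the general embedding $\ell_{p_n}\hookrightarrow\ell_{q_n}$ for one direction, and invoke Lemma \ref{flvhohho} restricted to $\mathbb{S}$ for the reverse inequality, concluding that $id$ restricted to $E$ is an isomorphism onto its image. The remark about re-indexing over $\mathbb{S}$ is a useful clarification that the paper leaves implicit.
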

	\begin{proof}
		Set
		\begin{align*}
			&X:=\{\{a_n\}; a_n=0\ \text{for}\ n\notin\mathbb{S}\}.
		\end{align*}
		Denote $X_p:=(X,\|.\|_{p_n})$, $X_q:=(X,\|.\|_{q_n})$. Then $X_p\subset \ell_{p_n}$, $X_q\subset \ell_{q_n}$ are infinite dimensional subspaces and $id:X_p\hookrightarrow X_q$ is bounded. By Theorem \ref{flvhohho} $id:X_q\hookrightarrow X_q$ is bounded and so $id:\ell_{p_n}\hookrightarrow \ell_{q_n}$ is not strictly singular.
	\end{proof}
	In the next Lemma we replace (\ref{***})
by a condition which is better to check.	\begin{lem}
		Let $0<p_-\le  p_n<q_n<\infty$ for all $n$. Assume
		\begin{align*}
			&\limsup_{n\rightarrow\infty}\frac{1}{\frac{1}{p_n}-\frac{1}{q_n}}=\infty.
		\end{align*}
		Then $id:\ell_{p_n}\hookrightarrow \ell_{q_n}$ is not strictly singular.
	\end{lem}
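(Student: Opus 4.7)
The plan is to reduce this to Theorem \ref{eovvjbjpjpj}. That theorem requires producing an infinite subset $\mathbb{S}\subset\mathbb{N}$ and a constant $0<c<1$ such that
\[
\sum_{n\in\mathbb{S}}c^{\frac{1}{\frac{1}{p_n}-\frac{1}{q_n}}}<\infty,
\]
so the task is to manufacture such an $\mathbb{S}$ from the limsup hypothesis.

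Write $r_n:=\frac{1}{\frac{1}{p_n}-\frac{1}{q_n}}$. The assumption $\limsup_{n\to\infty} r_n=\infty$ lets me extract an increasing sequence of indices $n_1<n_2<\dots$ with $r_{n_k}\ge k$ for every $k\in\mathbb{N}$: indeed, proceeding inductively, once $n_1,\dots,n_{k-1}$ have been chosen, the definition of $\limsup$ guarantees that $r_n\ge k$ for infinitely many $n$, so a valid $n_k>n_{k-1}$ exists. Set $\mathbb{S}:=\{n_k:k\in\mathbb{N}\}$.

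Now fix any $0<c<1$, say $c=\tfrac12$. Since $c<1$ and $r_{n_k}\ge k$, one has $c^{r_{n_k}}\le c^{k}$, so
\[
\sum_{n\in\mathbb{S}}c^{r_n}=\sum_{k=1}^{\infty} c^{r_{n_k}}\le\sum_{k=1}^\infty c^k=\frac{c}{1-c}<\infty.
\]
Thus the pair $(\mathbb{S},c)$ satisfies the hypothesis \eqref{***} of Theorem \ref{eovvjbjpjpj}, and the conclusion that $id:\ell_{p_n}\hookrightarrow \ell_{q_n}$ is not strictly singular follows immediately.

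There is no real obstacle here: the only subtlety is recognising that a limsup statement is strong enough to yield a subsequence along which $r_n$ grows at least linearly, which is all one needs to beat any fixed geometric series. The result is essentially a convenient reformulation of Theorem \ref{eovvjbjpjpj} in terms of a condition that can be checked by inspecting the exponents $p_n,q_n$ one index at a time.
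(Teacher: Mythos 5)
Your argument is correct and matches the paper's own proof essentially verbatim: extract a subsequence $n_k$ with $r_{n_k}\ge k$, take $c=\tfrac12$ to bound the sum over $\mathbb{S}$ by a geometric series, and invoke Theorem \ref{eovvjbjpjpj}. Nothing further to add.
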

	\begin{proof}By the assumption there exists a infinite subset $\mathbb{S}:=\{n_1,n_2,n_3,\dots\}\subset\mathbb{N}$
		such that
		\begin{align*}
			&\frac{1}{\frac{1}{p_{n_k}}-\frac{1}{q_{n_k}}}\ge k.
		\end{align*}
		Set
		\begin{align*}
			&X:=\{\{a_n\}; a_n=0\ \text{for}\ n\notin\mathbb{S}\}, \mbox{ and } X_p, X_q \mbox{ as in the previous proof}.
		\end{align*}
		Then
		\begin{align*}
			&\sum_{n\in \mathbb{S}}\Big(\frac{1}{2}\Big)^{\frac{1}{\frac{1}{p_n}-\frac{1}{q_n}}}=\sum_{k=1}^{\infty}\Big(\frac{1}{2}\Big)^{\frac{1}{\frac{1}{p_{n_k}}-\frac{1}{q_{n_k}}}}\le
			\sum_{k=1}^{\infty}\Big(\frac{1}{2}\Big)^{k }<\infty.
		\end{align*}
		By Theorem \ref{eovvjbjpjpj} $id:X_p\hookrightarrow X_q$ and the inverse identity are both bounded. Thus the embedding $id:\ell_{p_n}\hookrightarrow \ell_{q_n}$ is not strictly singular.
	\end{proof}

Now we introduce a lemma concerning $n$-dimensional subspaces of sequence spaces.
\begin{lem}[see Lemma 4 in \cite{Pl}]\label{plichko}
Let $X$ be some linear space of sequences 
vanishing at infinity. Then every subspace $E_n \subset X$ , $\dim {E_n} = n$, contains a
sequence $x(i)$ such that $\max\{|x(i)|, i \in \mathbb{N}\}$ is attained in at least n points.
\end{lem}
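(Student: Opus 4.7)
The plan is to prove the lemma by an iterative enlargement of the set where the maximum of $|x|$ is attained. For $z\in X\subset c_0$, write $P(z):=\{i\in\mathbb{N}:|z(i)|=\|z\|_\infty\}$; since $z$ vanishes at infinity, $P(z)$ is automatically finite and nonempty whenever $z\ne 0$. The strategy is to start with any unit vector $x\in E_n$ and repeatedly replace it by a unit vector with strictly larger peak set, terminating once $|P(x)|\ge n$.

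The core step is the following growth mechanism. Assume $x\in E_n$ with $\|x\|_\infty=1$ and $|P(x)|=k<n$. Since the evaluation map $E_n\to\mathbb{R}^k$ at the indices of $P(x)$ is linear and its domain has dimension $n>k$, I can pick a nonzero $y\in E_n$ that vanishes on $P(x)$. I then form $x+ty$ and track $g(t):=\sup_{i\notin P(x)}|x(i)+ty(i)|$: this function is continuous in $t$ with $g(0)<1$, while $g(t)\to\infty$ as $t\to\infty$ (using that $y$ is not identically zero outside $P(x)$), and on $P(x)$ itself the value $|x+ty|\equiv 1$ since $y$ vanishes there. Consequently there exists $t^*>0$ with $g(t^*)=1$, and because $x+t^*y\in c_0$ this supremum is in fact a maximum, attained at some $i^*\notin P(x)$. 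Thus $\|x+t^*y\|_\infty=1$ with peak set containing $P(x)\cup\{i^*\}$, one element larger than before.

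Iterating the growth mechanism at most $n-1$ times yields the desired sequence. The principal obstacle in the argument is the use of the vanishing-at-infinity hypothesis: without it, the critical value $g(t^*)=1$ could be approached only in the limit along $i\to\infty$, no new peak coordinate would actually appear, and the peak set would fail to grow at this step. This is the one place where the assumption $X\subset c_0$ enters essentially; the remaining ingredients (the kernel dimension count producing $y$, continuity of $g$, and the intermediate value theorem producing $t^*$) are routine.
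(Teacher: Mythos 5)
The paper does not prove this lemma; it simply cites Lemma~4 of Plichko \cite{Pl}, so there is no in-text argument to compare against. Your proof is nonetheless correct and self-contained. The one delicate point you rightly isolate is that the supremum $g(t^*)=\sup_{i\notin P(x)}|x(i)+t^*y(i)|$ must actually be attained at some index $i^*\notin P(x)$; this follows because $x+t^*y\in c_0$ and $g(t^*)=1>0$, so the supremum cannot be approached only along $i\to\infty$. The remaining ingredients are sound: $P(x)$ is finite and nonempty for $x\ne 0$ in $c_0$; when $|P(x)|=k<n$ the evaluation map $E_n\to\mathbb{R}^k$ has nontrivial kernel, yielding $y\ne 0$ vanishing on $P(x)$ (and necessarily nonzero somewhere off $P(x)$, so $g(t)\to\infty$); $g$ is Lipschitz with constant $\|y\|_\infty$, hence continuous, and $g(0)<1$ since the maximum of $|x|$ off $P(x)$ is strictly below $1$; and because $y\equiv 0$ on $P(x)$, every old peak is preserved in $x+t^*y$ while $i^*$ is added, so the peak set strictly grows. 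Iterating at most $n-1$ times gives the desired unit vector with at least $n$ peaks. This enlargement-by-perturbation scheme is, to my knowledge, essentially the argument in Plichko's original paper.
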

	
	\begin{lem}\label{kdwckdcckvckk}
		Let $0<p_-\le  p_n<q_n\le q_+<\infty$ for all $n$. Assume that there is $\alpha$ such that for all $n$
		\begin{align*}
			&q_n-p_n\ge \alpha>0.
		\end{align*}
		Then the embedding $id:\ell_{p_n}\hookrightarrow \ell_{q_n}$ satisfies
		\begin{align*}
			&b_n(id)\le n^{-\frac{\alpha}{(q_+-\alpha)q_+}}.
		\end{align*}
	\end{lem}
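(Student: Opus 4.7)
The plan is to combine the Plichko-type selection Lemma~\ref{plichko} with a modular factorization to produce, inside any $n$-dimensional subspace of $\ell_{p_n}$, a unit vector whose $\ell_{q_n}$-norm exhibits the desired decay. Fix such a subspace $E \subset \ell_{p_n}$. Since $p_n \le q_+ < \infty$ for every $n$, each element of $\ell_{p_n}$ vanishes at infinity, so Lemma~\ref{plichko} yields a nonzero $x \in E$ whose maximum $M := \|x\|_\infty$ is attained on some set $S \subset \mathbb{N}$ with $|S| \ge n$. After rescaling I take $\|x\|_{p_n} = 1$; monotone convergence (as $\lambda \searrow 1$ in the definition of the norm) then gives $m_{p_n}(x) \le 1$, and in particular $M \le 1$.

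Next I would bound $M$. The key observation is that $p_i \le q_i - \alpha \le q_+ - \alpha$, which together with $M \le 1$ forces $M^{p_i} \ge M^{q_+ - \alpha}$ for every $i \in S$. Summing over $S$,
\[
n\, M^{q_+ - \alpha} \le \sum_{i \in S} M^{p_i} \le m_{p_n}(x) \le 1,
\]
so $M \le n^{-1/(q_+ - \alpha)}$. I would then estimate $m_{q_n}(x)$ via the factorization $|x_i|^{q_i} = |x_i|^{p_i} \cdot |x_i|^{q_i - p_i}$: the bounds $|x_i| \le M \le 1$ and $q_i - p_i \ge \alpha$ give $|x_i|^{q_i - p_i} \le M^\alpha$, and hence $m_{q_n}(x) \le M^\alpha m_{p_n}(x) \le M^\alpha$.

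To convert the modular estimate into a norm estimate, I take $\lambda := M^{\alpha/q_+}$ and check $\|x\|_{q_n} \le \lambda$ by testing $\mu > \lambda$. If $\mu \ge 1$ then $m_{q_n}(x/\mu) \le m_{q_n}(x) \le 1$; if $\mu < 1$ then $\mu^{q_i} \ge \mu^{q_+}$ yields
\[
m_{q_n}(x/\mu) = \sum_i |x_i|^{q_i} \mu^{-q_i} \le \mu^{-q_+} m_{q_n}(x) \le M^\alpha/\mu^{q_+} < 1.
\]
Either way $\|x\|_{q_n} \le M^{\alpha/q_+} \le n^{-\alpha/((q_+-\alpha)q_+)}$. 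Since $E$ was an arbitrary $n$-dimensional subspace, the definition of the Bernstein number yields the announced bound $b_n(\id) \le n^{-\alpha/((q_+-\alpha)q_+)}$.

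The main delicate point is the sharpening $p_i \le q_+ - \alpha$ (as opposed to the cruder $p_i \le q_+$) used in the estimate of $M$: without it one gets only the weaker exponent $\alpha/q_+^2$ in place of the target $\alpha/((q_+-\alpha)q_+)$, so the whole argument hinges on exploiting the uniform spectral gap $q_n - p_n \ge \alpha$ on both sides of the calculation, once at the level of bounding peak height and once at the level of bounding $m_{q_n}$ by $M^\alpha m_{p_n}$.
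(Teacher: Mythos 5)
Your proof is correct and follows essentially the same approach as the paper's: select a vector in the given $n$-dimensional subspace via Lemma~\ref{plichko} whose peak is attained at $n$ indices, bound the peak height from the $\ell_{p_n}$-modular constraint, factor $|x_i|^{q_i}=|x_i|^{p_i}|x_i|^{q_i-p_i}$ using the gap $q_i-p_i\ge\alpha$, and convert the resulting $\ell_{q_n}$-modular bound $M^\alpha$ into the norm bound $M^{\alpha/q_+}$. The only cosmetic difference is that the paper first derives the intermediate bound $A\le n^{-1/p_+}$ (and hence $b_n(id)\le n^{-\alpha/(p_+q_+)}$) and only afterwards invokes $p_+\le q_+-\alpha$ to reach the stated exponent, whereas you plug $p_i\le q_+-\alpha$ in directly; both are equally valid and the paper's route incidentally shows a marginally sharper estimate that the lemma does not claim.
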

	\begin{proof} Let $X_n\subset \ell_{p_n}$ be an $n$-dimensional subspace. Due to Lemma \ref{plichko} we can take $a\in X_n$, $\|a\|_{p_n}=1$ such that
		$A:=\max\{|a_i|;i\in\mathbb{N}\}=|a_{k_1}|=|a_{k_2}|=\dots=|a_{k_n}|$. Clearly $A\le 1$. Then
		\begin{align*}
			&\sum_{k=1}^\infty |a_k|^{q_k}=\sum_{k=1}^\infty |a_k|^{p_k}|a_k|^{q_k-p_k}\le \sum_{k=1}^\infty |a_k|^{p_k}|a_k|^{\alpha}\le A^\alpha \sum_{k=1}^\infty |a_k|^{p_k}=A^\alpha.
		\end{align*}
		Moreover
		\begin{align*}
			&nA^{p_+}=\sum_{i=1}^n A^{p_+}=\sum_{i=1}^n A^{p_{k_i}}\le\sum_{i=1}^\infty |a_i|^{p_{i}}=1.
		\end{align*}
		So
		\begin{align*}
			& A\le n^{-\frac{1}{p_+}}.
		\end{align*}
		Thus
		\begin{align*}
			&\sum_{i=1}^n \Big(\frac{|a_i|}{A^{\frac{\alpha}{q_+}}}\Big)^{q_i}\le \sum_{i=1}^n \Big(\frac{|a_i|}{A^{\frac{\alpha}{q_i}}}\Big)^{q_i} \le  \sum_{i=1}^n\frac{|a_i|^{q_i} }{A^\alpha}\le 1          .
		\end{align*}
		and cosequently $\|a\|_{q_n}\le A^{\frac{\alpha}{q_+}}$. It gives $b_n(id)\le A^{\frac{\alpha}{q_+}}\le n^{-\frac{\alpha}{p_+ q_+}}\le n^{-\frac{\alpha}{(q_+-\alpha)q_+}}$.
		
	\end{proof}
	
	\begin{thm}\label{nvjgkdocnvkvevjfe}
		Let $0<p_-\le  p_n<q_n<\infty$ for all $n$. Assume
		\begin{align*}
			&\limsup_{n\rightarrow\infty}\frac{1}{\frac{1}{p_n}-\frac{1}{q_n}}<\infty.
		\end{align*}
		Then the embedding $id:\ell_{p_n}\hookrightarrow \ell_{q_n}$ is finitely strictly singular. Moreover $b_n(id)\le C n^{-\beta}$ for some positive constants $C,\beta$.
	\end{thm}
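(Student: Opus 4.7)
The plan is to reduce the theorem to Lemma \ref{kdwckdcckvckk} by truncating the exponents $q_n$ at a sufficiently large finite threshold, so that the bounded-exponent hypothesis of that lemma becomes applicable.

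Since $p_n<q_n<\infty$ for every $n$, each individual term $\frac{1}{1/p_n-1/q_n}=\frac{p_n q_n}{q_n-p_n}$ is a finite positive number, so the limsup hypothesis is equivalent to the uniform bound $L:=\sup_n \frac{p_n q_n}{q_n-p_n}<\infty$. Rearranging $p_n q_n\le L(q_n-p_n)$ yields the two elementary consequences $p_n\le \frac{Lq_n}{q_n+L}<L$ (so $p_n$ is uniformly bounded) and $q_n-p_n\ge p_n q_n/L\ge p_-^{\,2}/L$ (uniform gap between the two exponents).

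Next, I would set $q_+:=2L$ and define truncated exponents $\tilde q_n:=\min(q_n,q_+)$. From $p_n<L<q_+$ and $p_n<q_n$ I get $p_n<\tilde q_n\le q_+$; a short case split ($q_n\le q_+$ versus $q_n>q_+$) shows $\tilde q_n-p_n\ge \alpha$ uniformly for $\alpha:=\min(p_-^{\,2}/L,L)>0$. Lemma \ref{kdwckdcckvckk} therefore applies to the pair $(p_n,\tilde q_n)$ and yields
$$b_n\bigl(id_1:\ell_{p_n}\hookrightarrow \ell_{\tilde q_n}\bigr)\le n^{-\alpha/((q_+-\alpha)q_+)}=:n^{-\beta}.$$
Since $\tilde q_n\le q_n$ pointwise, Theorem 3.1 provides a bounded embedding $id_2:\ell_{\tilde q_n}\hookrightarrow \ell_{q_n}$, and the original identity factors as $id=id_2\circ id_1$. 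The submultiplicativity $b_n(id_2\circ id_1)\le \|id_2\|\,b_n(id_1)$ then delivers the desired estimate $b_n(id)\le C n^{-\beta}$ with $C:=\|id_2\|$.

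Finite strict singularity is immediate from $b_n(id)\to 0$: given $\varepsilon>0$, pick $N$ with $C N^{-\beta}<\varepsilon/2$; then every subspace $E\subset \ell_{p_n}$ with $\dim E\ge N$ contains an $N$-dimensional $E_0$ in which the infimum defining $b_N(id)$ is at most $C N^{-\beta}$, producing (up to arbitrarily small perturbation) a unit vector $u\in E_0\subset E$ with $\|u\|_{q_n}<\varepsilon$. The main obstacle is the potentially unbounded sequence $q_n$, for which Lemma \ref{kdwckdcckvckk} is not directly applicable; the truncation step above is exactly designed to circumvent this issue, after which everything else is routine.
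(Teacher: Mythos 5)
Your proof is correct and follows essentially the same strategy as the paper: derive from the hypothesis a uniform bound $p_n\le L$ and a uniform gap $q_n-p_n\ge p_-^2/L$, factor $id$ through an intermediate variable-exponent space with bounded exponents, apply Lemma~\ref{kdwckdcckvckk} to the first factor, and use submultiplicativity of Bernstein numbers. The only difference is the choice of intermediate exponent: the paper takes $p_n+\alpha$ with $\alpha=p_-^2/K$, while you take the truncation $\tilde q_n=\min(q_n,2L)$; both sit strictly between $p_n$ and $q_n$ with bounded supremum and uniform gap above $p_n$, so the two decompositions are interchangeable. Your variant is arguably a touch cleaner in that it makes explicit what the finite $q_+$ feeding into Lemma~\ref{kdwckdcckvckk} actually is, whereas the paper's statement $b_n(id_\alpha)\le n^{-\alpha/((q_+-\alpha)q_+)}$ tacitly reinterprets $q_+$ as $\sup(p_n+\alpha)$ rather than $\sup q_n$.
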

	\begin{proof}
		By the assumtion we have $K>0$ such that for all $n$
		\begin{align*}
			&\frac{1}{\frac{1}{p_n}-\frac{1}{q_n}}\le K.
		\end{align*}
		It gives
		\begin{align*}
			&0<\alpha:=\frac{p_-^2}{K}\le\frac{p_n q_n}{K}\le q_n-p_n.
		\end{align*}
		So $q_n-p_n$ is away from zero.
		Moreover
		\begin{align*}
			&p_n\le\frac{p_nq_n}{q_n-p_n}=\frac{1}{\frac{1}{p_n}-\frac{1}{q_n}}\le K
		\end{align*}
		and $p_n$ is bounded.
		
		Now we have  $p_-\le p_n\le p_n+\alpha\le q_n$. Moreover $p_n+\alpha$ is bounded. By Lemma \ref{kdwckdcckvckk} we know that the embedding $id_{\alpha}:\ell_{p_n}\hookrightarrow\ell_{p_n+\alpha}$ is strictly singular with $b_n(id_{\alpha})\le n^{-\frac{\alpha}{(q_+-\alpha)q_+}}$ and the embedding $\ell_{p_n}\hookrightarrow\ell_{q_n}$ is strictly singular as a composition of the embeddings $\ell_{p_n}\hookrightarrow\ell_{p_n+\alpha}$ and $\ell_{p_n+\alpha}\hookrightarrow\ell_{q_n}$. The behavior of $b_n(id)$ follows from the composition of operators.
	\end{proof}
	
	In what follows, in this section,  we assume $0<\min(p_-,q_-)\le  p_n,q_n\le \infty$ for all $n$. Set $A=\{n; p_n<q_n\}$. 
	
	Then by Theorem \ref{eflkkbtnbk} and Theorem \ref{xkclvlksjkjk} we know that
	$id: \ell_{q_n}\hookrightarrow\ell_{p_n}$ if and only if we have for some $0<c<1$
	\begin{align}
		&M:=\sum_{n\in\mathbb{A}}c^{\frac{1}{\frac{1}{p_n}-\frac{1}{q_n}}}<\infty.\label{emb}
	\end{align}
	
	\begin{lem}\label{sdpvjrghn}
		Assume \eqref{emb} and let $A$ be an infinite set. Then the embedding $id: \ell_{q_n}\hookrightarrow\ell_{p_n}$ is not strictly singular.
	\end{lem}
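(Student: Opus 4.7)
The plan is to mirror the strategy of Theorem \ref{eovvjbjpjpj}: I will exhibit an infinite-dimensional subspace $E \subset \ell_{q_n}$ on which the two quasi-norms $\|\cdot\|_{p_n}$ and $\|\cdot\|_{q_n}$ are equivalent. Once such an $E$ is produced, both $id|_E \colon E \to id(E)$ and its inverse are bounded, which by the definition of strict singularity immediately shows that $id \colon \ell_{q_n} \to \ell_{p_n}$ is not strictly singular.

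The candidate subspace I will use is
\[
E := \{a \in \ell_{q_n} : a_n = 0 \text{ for all } n \notin \mathbb{A}\}.
\]
Since $\#\mathbb{A} = \infty$, the space $E$ contains all finitely supported sequences indexed in $\mathbb{A}$, and hence $\dim E = \infty$. Every $a \in E$ is supported inside $\mathbb{A}$, where $p_n < q_n$; applying Theorem \ref{eflkkbtnbk}, with the hypothesis \eqref{emb} supplying the required $c$ and $M$, I obtain a constant $C$ depending only on $p_-, c, M$ with $\|a\|_{p_n} \le C \|a\|_{q_n}$ for every $a \in E$. For the reverse inequality I will invoke the basic embedding theorem stated at the start of Section~3: since $p_n \le q_n$ holds on the support of every $a \in E$, one gets $\|a\|_{q_n} \le C' \|a\|_{p_n}$. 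Combining the two bounds yields the norm equivalence on $E$ and completes the argument.

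I do not expect a genuine obstacle: the proof is a short restriction argument paralleling Theorem \ref{eovvjbjpjpj}. The one point requiring a brief check is that the constant produced by Theorem \ref{eflkkbtnbk} depends only on the data $(p_-, c, M)$ and not on any further structure of the indexing, so that the uniform estimate transfers cleanly to $E$, whose members have supports that are arbitrary subsets of $\mathbb{A}$. Because the proof of Theorem \ref{eflkkbtnbk} only uses the finiteness of $\sum_{n\in\mathbb{A}}c^{1/(1/p_n-1/q_n)}$ and the trivial bound on the complement, this check is immediate.
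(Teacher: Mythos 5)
Your proposal is correct and follows essentially the same route as the paper: restrict to the infinite-dimensional subspace of sequences supported on $\mathbb{A}$, where \eqref{emb} and Theorem~\ref{eflkkbtnbk} give $\|a\|_{p_n}\lesssim\|a\|_{q_n}$, and the basic embedding (Theorem 3.1, since $p_n<q_n$ on $\mathbb{A}$) gives the reverse. If anything you are slightly more careful than the paper's one-line proof, which attributes the full equivalence to Theorem~\ref{eflkkbtnbk} alone and leaves the reverse inequality implicit.
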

	\begin{proof}
		Set $X=\{\{a_n\}; a_n=0 \text{ for all }n\in \mathbb{N}\setminus A\}$. Then  $\dim X=\infty$, and norms on $\ell_{p_n}$ and $\ell_{p_n}$ on $X$ are equivalent by Theorem \ref{eflkkbtnbk}. Consequently the embedding is not strictly singular.
	\end{proof}
	
	\begin{lem}\label{lovfvjffjb}
		Assume \eqref{emb} and suppose that there exists an infinite set  $S\subset \mathbb{N}\setminus A$ such that for certain $0<c<1$
		\begin{align*}
			&\sum_{n\in S}c^{\frac{1}{\frac{1}{q_n}-\frac{1}{p_n}}}<\infty.
		\end{align*}
		Then the embedding $id:\ell_{q_n}\hookrightarrow\ell_{p_n}$ is not strictly singular.
	\end{lem}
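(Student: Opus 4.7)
The plan is to exhibit an infinite-dimensional subspace of $\ell_{q_n}$ on which $id$ has a bounded inverse; the natural candidate is $X:=\{\{a_n\}:a_n=0\text{ for every }n\in\mathbb{N}\setminus S\}$, which has infinite dimension because $S$ is infinite. It then suffices to prove that the $\ell_{p_n}$- and $\ell_{q_n}$-norms are equivalent on $X$, for then the restriction $id|_X$ is a Banach-space isomorphism onto its image and $id$ cannot be strictly singular.

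One direction is free. From $S\subset\mathbb{N}\setminus A$ one reads off $q_n\le p_n$ for every $n\in S$, so the basic monotonicity embedding (the first theorem of Section~3) applied on the index set $S$ immediately yields $\|a\|_{p_n}\le C_1\|a\|_{q_n}$ for all $a\in X$.

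For the reverse inequality I would apply Theorem~\ref{eflkkbtnbk} to $S$ with the roles of the two exponent sequences interchanged, i.e.\ setting $\widetilde p_n:=q_n$ and $\widetilde q_n:=p_n$. The corresponding set ``$\mathbb{A}$'' from that theorem becomes $\{n\in S:q_n<p_n\}$, and the summability hypothesis that needs to be verified is
\[
\sum_{n\in S,\,q_n<p_n}c^{\frac{1}{\frac{1}{q_n}-\frac{1}{p_n}}}<\infty,
\]
which follows directly from the hypothesis of the lemma, since on the complementary indices $p_n=q_n$ (including possibly $p_n=q_n=\infty$) the convention $c^{1/0}=c^\infty=0$ makes the terms vanish. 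Theorem~\ref{eflkkbtnbk} then delivers the embedding $\ell_{p_n}(S)\hookrightarrow\ell_{q_n}(S)$, that is, $\|a\|_{q_n}\le C_2\|a\|_{p_n}$ on $X$.

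Combining the two inequalities, the $\ell_{p_n}$- and $\ell_{q_n}$-norms are equivalent on the infinite-dimensional $X$, whence $id:\ell_{q_n}\hookrightarrow\ell_{p_n}$ is not strictly singular. There is no real obstacle here; the only thing to keep track of is the bookkeeping at indices where $p_n=q_n$ or where both exponents are infinite, which contribute nothing to the summability condition and nothing to the comparison of modulars restricted to $X$.
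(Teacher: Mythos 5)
Your proposal is correct and fills in exactly the detail the paper's one‑line proof (``We can see that on $S$, $id$ has inverse'') leaves implicit: restrict to the coordinate subspace supported on $S$, get $\|a\|_{p_n}\lesssim\|a\|_{q_n}$ there from monotonicity since $q_n\le p_n$ on $S\subset\mathbb{N}\setminus A$, and get the reverse bound $\|a\|_{q_n}\lesssim\|a\|_{p_n}$ by applying Theorem~\ref{eflkkbtnbk} with the exponent sequences swapped, for which the hypothesis $\sum_{n\in S}c^{1/(\frac{1}{q_n}-\frac{1}{p_n})}<\infty$ is precisely the needed summability condition. This is the same route the paper takes (and mirrors the proof of Theorem~\ref{eovvjbjpjpj}), just written out.
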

	\begin{proof}
		We can see that on $S$, $id$ has inverse.
	\end{proof}
	
	\begin{lem}\label{fbvjfibjfibj}		Assume  \eqref{emb} and suppose that either $A$ is infinite or
		\begin{align*}
			&\limsup_{n\in \mathbb{N}\setminus A}\sum_{n\in S}{\frac{1}{\frac{1}{q_n}-\frac{1}{p_n}}}=\infty.
		\end{align*}
		Then the embedding $id:\ell_{q_n}\hookrightarrow\ell_{p_n}$ is not strictly singular.
	\end{lem}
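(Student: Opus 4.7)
The plan is to reduce the claim to the two previously proved lemmas (\ref{sdpvjrghn} and \ref{lovfvjffjb}) via a case split dictated by the disjunction in the hypothesis. If $A$ is infinite, Lemma \ref{sdpvjrghn} gives the conclusion immediately, so no further work is needed in that case.

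Suppose instead that $A$ is finite but the $\limsup$ condition holds; I read the displayed formula as $\limsup_{n\in\mathbb{N}\setminus A}\frac{1}{\frac{1}{q_n}-\frac{1}{p_n}}=\infty$, since this is the natural sufficient condition matching the hypothesis of Lemma \ref{lovfvjffjb} (note that on $\mathbb{N}\setminus A$ we have $p_n\geq q_n$, so $\frac{1}{q_n}-\frac{1}{p_n}\geq 0$). From this $\limsup$ I would extract an increasing sequence $\{n_k\}_{k=1}^{\infty}\subset\mathbb{N}\setminus A$ satisfying $\frac{1}{\frac{1}{q_{n_k}}-\frac{1}{p_{n_k}}}\geq k$ for every $k$, set $S:=\{n_k\}$ and $c:=\tfrac{1}{2}$, and then estimate by a geometric series
$$\sum_{n\in S}c^{\frac{1}{\frac{1}{q_n}-\frac{1}{p_n}}}=\sum_{k=1}^{\infty}\Bigl(\tfrac{1}{2}\Bigr)^{\frac{1}{\frac{1}{q_{n_k}}-\frac{1}{p_{n_k}}}}\leq\sum_{k=1}^{\infty}2^{-k}=1<\infty.$$
This is precisely the hypothesis of Lemma \ref{lovfvjffjb}, and invoking that lemma finishes the case. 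The template for this geometric-series extraction is exactly the one used in the unnamed lemma between Theorem \ref{eovvjbjpjpj} and Lemma \ref{plichko}, where a $\limsup$ involving $\frac{1}{\frac{1}{p_n}-\frac{1}{q_n}}$ was converted into the finite-sum hypothesis of Theorem \ref{eovvjbjpjpj}; here the same trick applies with the roles of $p_n$ and $q_n$ interchanged on $\mathbb{N}\setminus A$.

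The only genuine difficulty is interpretive: the displayed condition in the statement has an apparent typo (the inner index $n$ in $\sum_{n\in S}$ clashes with the outer $\limsup_{n\in\mathbb{N}\setminus A}$, and $S$ is not otherwise quantified). Both reasonable readings—either the existence of an infinite $S\subset\mathbb{N}\setminus A$ with $\limsup_{n\in S}\frac{1}{\frac{1}{q_n}-\frac{1}{p_n}}=\infty$, or simply $\limsup_{n\in\mathbb{N}\setminus A}\frac{1}{\frac{1}{q_n}-\frac{1}{p_n}}=\infty$—are equivalent and are handled identically by the extraction above, so no real mathematical obstacle arises beyond fixing the notation.
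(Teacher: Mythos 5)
Your proof is correct and takes essentially the same approach as the paper's: the paper's own proof is the one-liner ``Follows instantly from the above Lemma,'' which leaves implicit exactly the case split (Lemma \ref{sdpvjrghn} for $A$ infinite, Lemma \ref{lovfvjffjb} for the $\limsup$ case via the subsequence-plus-geometric-series extraction) that you spell out. Your diagnosis of the typo in the displayed hypothesis --- the spurious $\sum_{n\in S}$ clashing with the outer index --- is also correct, and the intended reading is the one you adopt.
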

	\begin{proof}
		Follows instantly from the above Lemma.
	\end{proof}
	
	\begin{lem}\label{fvljfiopvgjgj}
		Let $0< \min(p_-, q_-)\le  p_n,q_n\le \infty$ for all $n$. Set $A=\{n; p_n<q_n\}$ and suppose \eqref{emb}. Assume that  $A$ is finite and
		\begin{align*}
			&\limsup_{n\in \mathbb{N}\setminus A}\sum_{n\in S}{\frac{1}{\frac{1}{q_n}-\frac{1}{p_n}}}<\infty.
		\end{align*}
		Then the embedding $id:\ell_{q_n}\hookrightarrow\ell_{p_n}$ is strictly singular. Moreover $b_n(id)\le Cn^{-\beta} $ for some positive $C,\beta$.
	\end{lem}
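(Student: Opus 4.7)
The plan is to mirror the argument of Theorem \ref{nvjgkdocnvkvevjfe} with the roles of source and target exponents exchanged. Since $A$ is finite, the decomposition $\mathbb{N}=A\cup(\mathbb{N}\setminus A)$ gives a direct-sum splitting $\ell_{q_n}=\ell_{q_n}(A)\oplus\ell_{q_n}(\mathbb{N}\setminus A)$ with $\dim\ell_{q_n}(A)=|A|$. A standard pigeonhole argument (every $(n+|A|)$-dimensional subspace meets $\ell_{q_n}(\mathbb{N}\setminus A)$ in codimension $\le |A|$) yields $b_{n+|A|}(id)\le b_n(id|_{\mathbb{N}\setminus A})$, so it suffices to produce a polynomial Bernstein bound on $\mathbb{N}\setminus A$. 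Accordingly, we assume from the outset that $A=\emptyset$ and $q_n\le p_n$ for every $n$.

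Next we extract a uniform quantitative gap between $p_n$ and $q_n$. The limsup hypothesis (understood as $+\infty$ when $q_n=p_n$ or $q_n=\infty$) yields a constant $K>0$ such that, after removing finitely many $n$, $\frac{1}{q_n}-\frac{1}{p_n}\ge\frac{1}{K}$. This forces $q_n\le K<\infty$, and
\[
p_n-q_n \;=\; p_nq_n\Bigl(\frac{1}{q_n}-\frac{1}{p_n}\Bigr) \;\ge\; \frac{p_nq_n}{K} \;\ge\; \frac{q_-^2}{K} \;=:\;\alpha\;>\;0,
\]
so that $r_n:=q_n+\alpha\le p_n$ and $r_n\le K+\alpha$ uniformly in $n$. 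We then factor the embedding as
\[
\ell_{q_n}\;\hookrightarrow\;\ell_{r_n}\;\hookrightarrow\;\ell_{p_n}.
\]
The second factor is bounded because $r_n\le p_n$ (the trivial-direction embedding supplied by the first theorem of Section~3, with Theorem \ref{devkivvjknh} covering the indices where $p_n=\infty$ since $r_n$ is bounded). For the first factor the exponents satisfy $q_-\le q_n<r_n\le K+\alpha$ with $r_n-q_n=\alpha$, so Lemma \ref{kdwckdcckvckk} applies directly (reading its $p_n,q_n$ as our $q_n,r_n$) and gives $b_n(\ell_{q_n}\hookrightarrow\ell_{r_n})\le n^{-\alpha/(K(K+\alpha))}$.

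Composing via the elementary ideal inequality $b_n(T\circ S)\le\|T\|\,b_n(S)$ yields $b_n(id|_{\mathbb{N}\setminus A})\le Cn^{-\beta}$ with $\beta=\alpha/(K(K+\alpha))$, and the finite shift coming from $A$ gives $b_n(id)\le C'n^{-\beta}$ for all large $n$; in particular $id$ is finitely strictly singular, hence strictly singular. The main technical obstacle is the possible presence of $p_n=\infty$ on $\mathbb{N}\setminus A$, which the detour through $\ell_{r_n}$---whose exponents are always finite and uniformly bounded---is designed to sidestep, reducing the nontrivial Bernstein estimate to Lemma \ref{kdwckdcckvckk} in its native regime of finite, uniformly bounded exponents.
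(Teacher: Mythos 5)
Your proof is correct and takes essentially the same route as the paper: a finite-codimension reduction that discards the coordinates in $A$ (the paper phrases it via Gaussian elimination on a basis, you via a pigeonhole dimension count, but these are the same step), followed by the factoring $\ell_{q_n}\hookrightarrow\ell_{q_n+\alpha}\hookrightarrow\ell_{p_n}$ and an appeal to Lemma \ref{kdwckdcckvckk} --- which is precisely the content of Theorem \ref{nvjgkdocnvkvevjfe}, cited by the paper and inlined by you. Your version is marginally more careful: by routing through $\ell_{r_n}$ with $r_n=q_n+\alpha$ finite and uniformly bounded you explicitly cover the indices with $p_n=\infty$ on $\mathbb{N}\setminus A$, which the paper's verbatim citation of Theorem \ref{nvjgkdocnvkvevjfe} (stated only for finite exponents) glosses over.
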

	\begin{proof}
		Let $A$ has $k$ elements. Without lose of generality we can assume $A=\{1,2,\dots,k\}$. Recall
		\begin{align*}
			&b_n=\sup_{E\subset\ell_{q_n},\dim(E)=n}\inf_{0\neq a\in E} \frac{\|a\|_{p_n}}{\|a\|_{q_n}}.
		\end{align*}
		Fix $E\subset\ell_{q_n}$, $\dim(E)=n>k$. Take a basis $(b^{(1)},b^{(2)},\dots,b^{(n)})$ of $E$. Rewrite this basis into an infinite matrix
		\begin{align*}
			\begin{pmatrix}
				b^{(1)}_1&b^{(1)}_2&b^{(1)}_3&b^{(1)}_4&\dots\\
				b^{(2)}_1&b^{(2)}_2&b^{(2)}_3&b^{(2)}_4&\dots\\
				\vdots&&&&\\
				b^{(n)}_1&b^{(n)}_2&b^{(n)}_3&b^{(n)}_4&\dots
			\end{pmatrix}.
		\end{align*}
		Using the Gaussian elimination process we can find another basis $(a^{(1)},a^{(2)},\dots,a^{(n)})$ of $E$ such that $a^{(i)}_j=0$ for all $k<i\le n,\ 1\le j\le k$. After rewriting it into a matrix we have
		\begin{align*}
			\begin{pmatrix}
				a^{(1)}_1&a^{(1)}_2&a^{(1)}_3&\dots&a^{(1)}_k&a^{(1)}_{k+1}&a^{(1)}_{k+2}&\dots\\
				a^{(2)}_1&a^{(2)}_2&a^{(2)}_3&\dots&a^{(2)}_k&a^{(2)}_{k+1}&a^{(2)}_{k+2}&\dots\\
				\vdots&&&&&&&\\
				a^{(k)}_1&a^{(k)}_2&a^{(k)}_3&\dots&a^{(k)}_k&a^{(k)}_{k+1}&a^{(k)}_{k+2}&\dots\\
				0&0&0&\dots&0&a^{(k+1)}_{k+1}&a^{(k+1)}_{k+2}&\dots\\
				0&0&0&\dots&0&a^{(k+2)}_{k+1}&a^{(k+2)}_{k+2}&\dots\\
				\vdots&&&&&&&\\
				0&0&0&\dots&0&a^{(n)}_{k+1}&a^{(n)}_{k+2}&\dots
			\end{pmatrix}.
		\end{align*}
		Set $F=\spa(a^{k+1},a^{k+2},\dots, a^{n}$. Then $\dim(F)=n-k,\ F\subset E$. By Theorem \ref{nvjgkdocnvkvevjfe} we obtain
		\begin{align*}
			&\inf_{0\neq a\in F}\frac{\|a\|_{p_n}}{\|a\|_{q_n}}\le \sup_{F\subset\ell_{q_n}, \dim(F)=n-k}\inf_{0\neq a\in F}\frac{\|a\|_{p_n}}{\|a\|_{q_n}}\\
			&=b_n\big(Id:\ell_{q_n}(\mathbb{N}\setminus A)\rightarrow \ell_{p_n}(\mathbb{N}\setminus A)\big)\le C(n-k)^{-\beta}.
		\end{align*}
		Taking supremum over all $E$ we finally obtain
		\begin{align*}
			&\inf_{0\neq a\in F}\frac{\|a\|_{p_n}}{\|a\|_{q_n}}\le \sup_{F\subset\ell_{q_n}, \dim(F)=n-k}\inf_{0\neq a\in F}\frac{\|a\|_{p_n}}{\|a\|_{q_n}}, \qquad 	\mbox{ and } \\
			&b_n=\sup_{E\subset\ell_{q_n},\dim(E)=n}\inf_{0\neq a\in E} \frac{\|a\|_{p_n}}{\|a\|_{q_n}}\le C(n-k)^{-\beta}\le \widetilde{C}n^{-\beta}.
		\end{align*}
	\end{proof}
	
	\section{Appendix}
	 Here we provide some basic facts about quasi-Banach, quasi-modular and quasi-norm $\l_{p_n}$ spaces which we did not find in the standard literature. These results are mentioned here for the sake of completeness and convenience of readers.

	\begin{thm}
		Let $X,Y$ be quasi-Banach spaces and $T:X\rightarrow Y$ be linear bounded. If $T$ is finitely strictly singular then $T$ is strictly and singular.
	\end{thm}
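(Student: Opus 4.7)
The plan is to prove the contrapositive: assume $T$ is not strictly singular and derive that $T$ is not finitely strictly singular. First I would unpack the failure of strict singularity. By the definition recalled in Section 2, if $T$ is not strictly singular, then there exists an infinite-dimensional subspace $E\subset X$ on which $T$ is bounded and $T^{-1}:T(E)\to E$ is bounded. The boundedness of $T^{-1}$ supplies a constant $C>0$ with $\|T^{-1}y\|_X\le C\|y\|_Y$ for all $y\in T(E)$; substituting $y=Tx$ for $x\in E$ yields the lower estimate
\[
\|Tx\|_Y \ge c\,\|x\|_X \qquad \text{for every } x\in E,
\]
with $c:=1/C>0$.

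Next I would leverage this lower estimate to block finite strict singularity. Set $\varepsilon := c/2$. If $T$ were finitely strictly singular, Definition~1.7 would furnish an $n_0\in\mathbb{N}$ such that every subspace of $X$ of dimension at least $n_0$ contains a unit vector $x$ with $\|Tx\|_Y\le\varepsilon$. Since $\dim E=\infty$, we can choose an arbitrary $n_0$-dimensional subspace $F\subset E$, and applying the finite-strict-singularity property to $F$ produces $x\in F\subset E$ with $\|x\|_X=1$ and $\|Tx\|_Y\le c/2$. Combined with the lower bound $\|Tx\|_Y\ge c\|x\|_X=c$ valid on $E$, this forces $c\le c/2$, a contradiction.

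The argument is essentially a one-shot application of the definitions, so I do not anticipate real obstacles. The only point that deserves a moment of verification is that the quasi-normed setting does not interfere: we never invoke the triangle inequality, convexity, completeness, or the open mapping theorem; boundedness of the inverse is used \emph{by hypothesis}, directly supplying the needed lower bound, and the quasi-norm axioms (ii) and (iii) from Definition 1.1 are all that is required to rescale vectors and compare $\|Tx\|_Y$ against $\varepsilon$. Thus the proof transfers verbatim from the Banach setting.
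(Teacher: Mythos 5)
Your proof is correct and is essentially the contrapositive restatement of the paper's argument: both reduce to the observation that finite strict singularity produces, inside any infinite-dimensional subspace $E$, unit vectors $x$ with $\|Tx\|_Y$ arbitrarily small, which is incompatible with $T^{-1}$ being bounded on $T(E)$. The direction of the implication is written differently (paper: assume FSS and show no $E$ works; you: assume some $E$ works and contradict FSS), but the substance is identical.
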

	\begin{proof}
		Assume that $T$ is finitely strictly singular. Let $E\subset X$, $\dim(E)=\infty$. Choose $E_n\subset E$, $\dim(E)=n$. By the assumption we can find $n_k$ and $x_{n_k}\in E_{n_k}$ such that $\|x_{n_k}\|_X=1$ and $\|T(x_{n_k})\|_Y\le \frac{1}{k}$. Thus we have found a sequence $x_{n_k}\in E$, $\|x_{n_k}\|_X=1$ and $\|T(x_{n_k})\|_Y\le \frac{1}{k}$. So the inverse operator $T^{-1}$ cannot be bounded.
	\end{proof}
	
	\begin{thm}
		Let $X,Y$ be quasi-Banach spaces and $T:X\rightarrow Y$ be linear and  bounded. Then $T$ is finitely strictly singular if and only if $b_n(T)\rightarrow 0$.
	\end{thm}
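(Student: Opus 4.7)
The proof is essentially an unwinding of the two definitions, with the only subtlety being how to pass from a bound on $b_n(T)$ (a supremum over $n$-dimensional subspaces) to a bound valid on \emph{every} subspace of dimension at least $n$, and keeping strict versus non-strict inequalities under control.

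For the forward direction, assume $T$ is finitely strictly singular and fix $\varepsilon>0$. The definition yields $n_0\in\mathbb{N}$ such that every subspace $E\subset X$ with $\dim E\ge n_0$ contains a unit vector $x$ with $\|Tx\|_Y\le\varepsilon$. In particular, for every $n\ge n_0$ and every $n$-dimensional $E\subset X$, we have $\inf_{u\in E,\,\|u\|_X=1}\|Tu\|_Y\le\varepsilon$. Taking the supremum over all such $E$ gives $b_n(T)\le\varepsilon$ for all $n\ge n_0$, hence $b_n(T)\to 0$.

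For the converse, suppose $b_n(T)\to 0$ and let $\varepsilon>0$. Choose $n$ so that $b_n(T)<\varepsilon/2$. Given any subspace $E\subset X$ with $\dim E\ge n$, pick any $n$-dimensional subspace $E'\subset E$. By definition of $b_n(T)$ as a supremum, $\inf_{u\in E',\,\|u\|_X=1}\|Tu\|_Y\le b_n(T)<\varepsilon/2$, so there exists $x\in E'\subset E$ with $\|x\|_X=1$ and $\|Tx\|_Y<\varepsilon$. This is exactly the definition of finite strict singularity.

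I do not expect any genuine obstacle here: both directions are one-line manipulations of suprema and infima. The only point requiring mild care is that the definition of finite strict singularity demands the conclusion for \emph{all} $E$ with $\dim E\ge n$, not only for $n$-dimensional ones; this is handled by simply passing to an $n$-dimensional subspace $E'\subset E$ and using monotonicity of the infimum, which is why one chooses $n$ with $b_n(T)<\varepsilon/2$ (to leave room for strict inequality when realizing the infimum).
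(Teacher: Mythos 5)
Your proof is correct and takes essentially the same approach as the paper's: both directions are a direct unwinding of the two definitions. The only (cosmetic) difference is that you present the directions in the opposite order and use the $\varepsilon/2$ trick to extract a nearly-optimal vector from the infimum defining $b_n(T)$, whereas the paper tacitly relies on the infimum over the finite-dimensional unit sphere being attained.
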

	\begin{proof}
		Assume first that $b_n(T)\rightarrow 0$. Let $\varepsilon>0$. Find $n_0$ such that for $n>n_0$
		\begin{align*}
			&b_n(T)=\sup_{E\subset X, \dim(E)=n}\inf_{u\in E, \|u\|_X=1}\|Tu\|_Y\le \varepsilon.
		\end{align*}
		Let $E\subset X$, $\dim(E)=n$. By the definition of $b_n(T)$ we know that there is $0\neq x$ with $\frac{\|T(x)\|_Y}{\|x\|_X}\le \varepsilon$. Setting $z=\frac{x}{\|x\|_X}\in E$ we have $z\in E,\ \|z\|_X=1$ and $\|T(z)\|_Y\le\varepsilon$. So $T$ is finitely strictly singular.

		Assume now that $T$ is finitely strictly singular. By the definition we know that for each $\varepsilon>0$ there exists $n_0\in\mathbb{N}$ such that for all $n\ge n_0$ and for all subspaces $E\subset X$, $\dim(E)= n$ we can find $x\in E$ with $\|x\|_X=1$ and $\|T(x)\|_Y\le\varepsilon$. Choose $\varepsilon>0$ and find $n_0$ from the definition of finitely strictly singularity.
		We estimate $b_n(T)$. Take $E\subset X$, $\dim(E)=n>n_0$. Then there is $x\in E$ such that $\|x\|_X=1$ and $\|T(x)\|_Y\le\varepsilon$. So $\inf\limits_{0\neq x\in E}\frac{\|T(x)\|_Y}{\|x\|_X}\le\varepsilon$ and consequently
		\begin{align*}
			&b_n(T)=\sup_{E\subset X, \dim(E)=n}\inf_{x\in E, \|x\|_X=1}\|T(x)\|_Y\le \varepsilon.
		\end{align*}
		Thus $b_n(T)\rightarrow 0$.
	\end{proof}

	\begin{defi}
		Let $0<p_-\le p_n\le \infty$. For $x,y\in\ell_{p_n}$ we define
		\begin{align*}
			&\varrho_{p_n}(x,y):=\sum_{p_n<1}|x_n-y_n|^{p_n}+\|(x-y)\chi_{1\le p_n\le\infty}\|_{p_n}.
		\end{align*}
	\end{defi}

	\begin{lem}
		The function $\varrho_{p_n}$ is a metric on $\ell_{p_n}$.
	\end{lem}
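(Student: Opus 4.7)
The plan is to verify the four standard axioms of a metric. Non-negativity is immediate since both summands in the definition of $\varrho_{p_n}$ are non-negative (the first by $|a|^{p_n}\ge 0$, the second by axiom (iii) of the definition of a quasi-Banach sequence function space). Symmetry follows from $|x_n - y_n| = |y_n - x_n|$ together with axiom (iv) applied to the second term. The identity of indiscernibles is similarly straightforward: $\varrho_{p_n}(x,x)=0$ is clear, and conversely, if $\varrho_{p_n}(x,y)=0$ then each summand $|x_n-y_n|^{p_n}$ with $p_n<1$ must vanish, while the restricted sequence $(x-y)\chi_{\{1\le p_n\le\infty\}}$ has zero quasi-norm (and hence is zero by axiom (iii)), so $x=y$.

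The substantive step is the triangle inequality, which I would handle by treating the two pieces of $\varrho_{p_n}$ separately. For the sum over $\{n:p_n<1\}$, the concavity of $t\mapsto t^{p_n}$ on $[0,\infty)$ yields the pointwise inequality $(a+b)^{p_n}\le a^{p_n}+b^{p_n}$ for non-negative $a,b$; applying this with $a=|x_n-y_n|$ and $b=|y_n-z_n|$, using $|x_n-z_n|\le a+b$, and summing over the relevant indices gives subadditivity of the first term. For the quasi-norm piece, I would restrict attention to the index set $\{n:1\le p_n\le\infty\}$; on this set the effective $p_-$ is at least $1$, so the constant $2^{1/p_- -1}$ appearing in the quasi-triangle inequality \eqref{++} is at most $1$. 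Hence on the restricted index set the quasi-triangle inequality becomes a genuine triangle inequality $\|a+b\|_{p_n}\le \|a\|_{p_n}+\|b\|_{p_n}$ (equivalently, the usual Minkowski inequality for a Luxemburg norm built from a convex modular, where the $\sup$ contribution for indices with $p_n=\infty$ behaves linearly and causes no difficulty). Summing the bounds for the two pieces delivers $\varrho_{p_n}(x,z)\le \varrho_{p_n}(x,y)+\varrho_{p_n}(y,z)$.

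I expect the only point requiring minor care, rather than a genuine obstacle, is checking that $\varrho_{p_n}(x,y)<\infty$ whenever $x,y\in\ell_{p_n}$: the second piece is controlled by $\|x-y\|_{p_n}$, while for the first piece, writing $|x_n|^{p_n}=\|x\|_{p_n}^{p_n}\,|x_n/\|x\|_{p_n}|^{p_n}$ and using that $|x_n/\|x\|_{p_n}|^{p_n}\le 1$ by the Luxemburg infimum shows that $\sum_{p_n<1}|x_n|^{p_n}$ is dominated by a constant depending on $\|x\|_{p_n}$ (similarly for $y$). Combined with the subadditivity proved above, this ensures $\varrho_{p_n}$ takes values in $[0,\infty)$, as required of a metric.
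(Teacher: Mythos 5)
Your proof is correct and takes essentially the same route as the paper: the only substantive step is the triangle inequality for the $p_n<1$ part, handled via the subadditivity $(a+b)^{p}\le a^{p}+b^{p}$ for $0<p<1$, and the Luxemburg-norm part is a genuine norm because the effective exponents there are all $\ge 1$. The paper's proof is two lines and addresses only the $p_n<1$ piece, taking the other axioms and the $p_n\ge 1$ piece for granted; you fill those in, and you also add the finiteness check $\varrho_{p_n}(x,y)<\infty$ (via $\sum_{p_n<1}|a_n|^{p_n}\le\max\{1,\|a\|_{p_n}\}$), a point the paper silently omits but which is genuinely part of showing $\varrho_{p_n}$ is a metric.
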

	\begin{proof}
		It suffices to prove the triangle inequality only for the part $"p_n<1"$. Since $|a+b|^p\le |a|^p+|b|^p$ for $0<p<1$ we have
		\begin{align*}
			&\sum_{p_n<1}|x_n-y_n|^{p_n}\le\sum_{p_n<1}|x_n-z_n|^{p_n}+\sum_{p_n<1}|z_n-y_n|^{p_n},
		\end{align*}
	which conclude the proof.
	\end{proof}
	
%
%

Given a sequence $p_n$ with $0< p_n\le \infty$ we
set $B_{p_n}(x,\varepsilon):=\{y;\|x-y\|_{p_n}\}<\varepsilon$; $U_{p_n}(x,\varepsilon):=\{y;\varrho_{p_n}(x,y)\}<\varepsilon$.
\begin{lem}
Let $0<  p_n<1$.
Assume $0< \varepsilon <1$ and $y\in B_{p_n}(0,\varepsilon)$. Then there exists $\delta>0$ such that $U_{p_n}(y,\delta)\subset B_{p_n}(x,\varepsilon)$.
\end{lem}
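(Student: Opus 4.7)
Since every $p_n<1$, the ``$1\le p_n\le\infty$'' part in the definition of $\varrho_{p_n}$ is empty, so $\varrho_{p_n}(u,v)=\sum_n|u_n-v_n|^{p_n}=m_{p_n}(u-v)$, while the quasi-norm $\|\cdot\|_{p_n}$ is the Luxemburg gauge of the very same modular $m_{p_n}$. The key algebraic fact I will exploit is the subadditivity $m_{p_n}(u+v)\le m_{p_n}(u)+m_{p_n}(v)$, which follows at once from the pointwise inequality $(a+b)^p\le a^p+b^p$ valid for $a,b\ge 0$ and $0<p\le 1$. I read the ``$x$'' appearing in the conclusion as a typo for $0$, since only then is the conclusion compatible with the hypothesis $y\in B_{p_n}(0,\varepsilon)$.

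First I would set $A:=\|y\|_{p_n}<\varepsilon$ and fix any $\mu$ with $A<\mu<\varepsilon$. By the infimum in the Luxemburg definition there is $\lambda\in(A,\mu)$ with $m_{p_n}(y/\lambda)\le 1$, and because $\mu>\lambda$ we have $(|y_n|/\mu)^{p_n}\le(|y_n|/\lambda)^{p_n}$ coordinatewise, with strict inequality at every $n$ where $y_n\ne 0$. Summing gives $S:=m_{p_n}(y/\mu)<1$ (the case $y=0$ is trivial), so that $\eta:=1-S>0$ is a genuine slack. Then for an arbitrary $z$, subadditivity yields
\begin{equation*}
m_{p_n}(z/\mu)\le m_{p_n}(y/\mu)+m_{p_n}\bigl((z-y)/\mu\bigr)=S+\sum_n\mu^{-p_n}|z_n-y_n|^{p_n},
\end{equation*}
and since $\mu<1$ together with $p_n\le 1$ forces $\mu^{-p_n}\le\mu^{-1}$, the last sum is majorized by $\mu^{-1}\varrho_{p_n}(y,z)$. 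Choosing $\delta:=\eta\mu$ now guarantees $m_{p_n}(z/\mu)<S+\eta=1$ for every $z\in U_{p_n}(y,\delta)$, so $\|z\|_{p_n}\le\mu<\varepsilon$, which is the desired inclusion.

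The only delicate step is securing the strict inequality $S<1$, i.e., verifying that the Luxemburg gauge leaves a positive gap for $\mu$ chosen strictly above the quasi-norm of $y$; once that slack $\eta$ is in hand, the rest is a one-line modular estimate resting on the elementary behaviour of $p$-th powers with $0<p\le 1$.
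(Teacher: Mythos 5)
Your proof is correct and follows essentially the same route as the paper's: bound the modular via the subadditivity $m_{p_n}(u+v)\le m_{p_n}(u)+m_{p_n}(v)$ coming from $(a+b)^p\le a^p+b^p$, then use $t^{-p_n}\le t^{-1}$ for $t<1$ and $p_n\le 1$ to convert the metric bound on $z-y$ into a modular bound. Your refinement of working at an intermediate scale $\mu\in(\|y\|_{p_n},\varepsilon)$ rather than at $\varepsilon$ directly is a nice touch — it simultaneously justifies the strict slack $S<1$ (which the paper asserts without comment for its $\alpha$) and yields $\|z\|_{p_n}\le\mu<\varepsilon$ at once, sidestepping the small gap between $m_{p_n}(z/\varepsilon)<1$ and $\|z\|_{p_n}<\varepsilon$ that the paper leaves implicit.
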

\begin{proof}
We fix $y\in B_{p_n}(0,\varepsilon)$. Then
\begin{align*}
&\alpha:=\sum_{n=1}^\infty\Big|\frac{y_n}{\varepsilon}\Big|^{p_n}<1.
\end{align*}
Choose $\delta=\varepsilon(1-\alpha)$. We take $z\in U_{p_n}(y,\delta)$, i.e.
\begin{align*}
&\sum_{n=1}^\infty|z_n-y_n|^{p_n}<\delta.
\end{align*}
Then
\begin{align*}
&\sum_{n=1}^\infty\Big|\frac{z_n}{\varepsilon}\Big|^{p_n}\le \sum_{n=1}^\infty\Big|\frac{z_n-y_n}{\varepsilon}\Big|^{p_n}+\sum_{n=1}^\infty\Big|\frac{y_n}{\varepsilon}\Big|^{p_n}\\
&\le \frac{1}{\varepsilon}\sum_{n=1}^\infty|z_n-y_n|^{p_n}+\sum_{n=1}^\infty\Big|\frac{y_n}{\varepsilon}\Big|^{p_n}<\frac{1}{\varepsilon}\delta+\alpha=1.
\end{align*}
So $z\in B_{p_n}(0,\varepsilon)$.
\end{proof}

\begin{lem}
Let $0<p_-\le p_n<1$.
Assume $0< \varepsilon <1$ and $y\in U_{p_n}(0,\varepsilon)$. Then there exists $\delta>0$ such that $B_{p_n}(y,\delta)\subset B_{p_n}(x,\varepsilon)$.
\end{lem}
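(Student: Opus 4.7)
The plan is to show that every Luxemburg quasi-norm ball around $y$ fits inside the modular ball $U_{p_n}(0,\varepsilon)$ (I read the stated conclusion $B_{p_n}(y,\delta)\subset B_{p_n}(x,\varepsilon)$ as a typo for $B_{p_n}(y,\delta)\subset U_{p_n}(0,\varepsilon)$, which is the companion to the previous lemma and yields that the two topologies coincide). The idea is completely parallel to the previous lemma but run in the opposite direction: quantify the defect of $y$ from the boundary of $U_{p_n}(0,\varepsilon)$ and then absorb the perturbation using the $p_n<1$ subadditivity of $t\mapsto t^{p_n}$.

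First I would set $\alpha:=\sum_{n=1}^{\infty}|y_n|^{p_n}=\varrho_{p_n}(0,y)<\varepsilon$, so there is room $\varepsilon-\alpha>0$ to play with. Next I would prove the auxiliary estimate that for any sequence $w$ with $\|w\|_{p_n}<\delta\le1$ one has the modular bound
\begin{equation*}
\sum_{n=1}^{\infty}|w_n|^{p_n}\le \delta^{p_-}.
\end{equation*}
This follows by picking $\lambda$ with $\|w\|_{p_n}<\lambda<\delta\le1$, using $\sum (|w_n|/\lambda)^{p_n}\le1$, and writing $|w_n|^{p_n}=\lambda^{p_n}(|w_n|/\lambda)^{p_n}\le\lambda^{p_-}(|w_n|/\lambda)^{p_n}$ since $\lambda\le1$ and $p_n\ge p_-$; summation gives the bound.

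Then I would set $\delta:=\min\{1,((\varepsilon-\alpha)/2)^{1/p_-}\}>0$ and take any $z\in B_{p_n}(y,\delta)$. Invoking the elementary inequality $|a+b|^{p}\le|a|^{p}+|b|^{p}$ for $0<p<1$ (already used in the metric lemma), I get
\begin{equation*}
\varrho_{p_n}(0,z)=\sum_{n=1}^{\infty}|z_n|^{p_n}\le\sum_{n=1}^{\infty}|z_n-y_n|^{p_n}+\sum_{n=1}^{\infty}|y_n|^{p_n}\le \delta^{p_-}+\alpha\le \tfrac{\varepsilon-\alpha}{2}+\alpha<\varepsilon,
\end{equation*}
so $z\in U_{p_n}(0,\varepsilon)$, finishing the proof.

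The only mild obstacle is the auxiliary comparison in the second paragraph: passing from the Luxemburg quasi-norm to the modular $\sum|w_n|^{p_n}$ is the one place where the variable exponent matters, and the inequality degrades from $\delta$ to $\delta^{p_-}$. This is why $p_-$ (not just $p_n<1$) is needed in the hypothesis. Everything else is a clean application of the $p<1$ subadditivity already established in the preceding metric lemma.
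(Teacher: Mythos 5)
Your proposal is correct and essentially mirrors the paper's own argument: define $\alpha=\sum|y_n|^{p_n}<\varepsilon$, choose $\delta$ small, pass from $\|z-y\|_{p_n}<\delta\le 1$ to the modular bound $\sum|z_n-y_n|^{p_n}\le\delta^{p_-}$, and then apply the $p_n<1$ subadditivity to conclude $\sum|z_n|^{p_n}<\varepsilon$. You also correctly read the conclusion as $B_{p_n}(y,\delta)\subset U_{p_n}(0,\varepsilon)$; the only cosmetic difference is your choice $\delta=\min\{1,((\varepsilon-\alpha)/2)^{1/p_-}\}$ versus the paper's $\delta=(\varepsilon-\alpha)^{1/p_-}$ (which is automatically $<1$ since $\varepsilon<1$), and your safety factor of $2$ makes the final strict inequality immediate rather than relying on strictness propagating from the Luxemburg infimum.
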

\begin{proof}
We fix $y\in U_{p_n}(0,\varepsilon)$. Then
\begin{align*}
&\alpha:=\sum_{n=1}^\infty|y_n|^{p_n}<\varepsilon.
\end{align*}
Choose $\delta=(\varepsilon-\alpha)^{1/{p_-}}$. We take $z\in B_{p_n}(y,\delta)$, i.e.
\begin{align*}
&\sum_{n=1}^\infty\Big|\frac{z_n-y_n}{\delta}\Big|^{p_n}<1.
\end{align*}
Then
\begin{align*}
&1>\sum_{n=1}^\infty\Big|\frac{z_n-y_n}{\delta}\Big|^{p_n}\ge \sum_{n=1}^\infty\frac{|z_n-y_n|^{p_n}}{\delta^{p_-}}
\end{align*}
which implies
\begin{align*}
&\sum_{n=1}^\infty|z_n-y_n|^{p_n}\le \delta^{p_-}.
\end{align*}
Further
\begin{align*}
&\sum_{n=1}^\infty|z_n|^{p_n}\le\sum_{n=1}^\infty|z_n-y_n|^{p_n}+\sum_{n=1}^\infty|y_n|^{p_n}< \delta^{p_-}+\alpha=\varepsilon.
\end{align*}
So $z\in U_{p_n}(0,\varepsilon)$.
\end{proof}

As an easy consequence we obtain the following theorem.
\begin{thm}
Let $0<p_-\le p_n\le \infty$. Then the topologies given by the metric $\varrho_{p_n}$ and the quasi-norm $\|.\|_{p_n}$ coincide.
\end{thm}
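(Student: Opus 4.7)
The plan is to reduce the general case $0<p_-\le p_n\le\infty$ to the two preceding lemmas by combining translation invariance with a splitting of the index set. Both $\varrho_{p_n}(x,y)$ and $\|x-y\|_{p_n}$ depend only on the difference $x-y$, so both topologies are translation invariant; it therefore suffices to show that the systems of neighbourhoods of the origin coincide, i.e. for every $\varepsilon>0$ there exist $\delta,\eta>0$ with $U_{p_n}(0,\delta)\subset B_{p_n}(0,\varepsilon)$ and $B_{p_n}(0,\eta)\subset U_{p_n}(0,\varepsilon)$.

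Next I decompose $\mathbb{N}=A\cup B$ with $A=\{n:p_n<1\}$ and $B=\{n:1\le p_n\le\infty\}$, and for any sequence $x$ I write $x_A:=x\chi_A$ and $x_B:=x\chi_B$. On the metric side the definition gives
$$\varrho_{p_n}(x,0)=\sum_{n\in A}|x_n|^{p_n}+\|x_B\|_{p_n},$$
so $\varrho_{p_n}(x,0)$ is small iff both $\sum_{n\in A}|x_n|^{p_n}$ and $\|x_B\|_{p_n}$ are small. On the quasi-norm side property (v) of the quasi-Banach sequence space yields $\|x_A\|_{p_n},\|x_B\|_{p_n}\le\|x\|_{p_n}$, while the quasi-triangle inequality \eqref{++} applied to $x=x_A+x_B$ gives $\|x\|_{p_n}\le 2^{1/p_--1}(\|x_A\|_{p_n}+\|x_B\|_{p_n})$. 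Hence $\|x\|_{p_n}$ is small iff both $\|x_A\|_{p_n}$ and $\|x_B\|_{p_n}$ are small.

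On the part $B$ there is nothing to prove, since the metric already reduces to the quasi-norm: $\varrho_{p_n}(x_B,0)=\|x_B\|_{p_n}$. On the part $A$ the restricted exponents satisfy $0<p_n<1$ and $\inf_{n\in A}p_n\ge p_->0$, so the two preceding lemmas apply verbatim to the subspace $\{x:x_n=0 \text{ for }n\notin A\}$ and show that the modular $\sum_{n\in A}|x_n|^{p_n}$ and the quasi-norm $\|x_A\|_{p_n}$ generate the same topology at $0$. Combining the equivalences for $A$ and for $B$ through the two bookkeeping observations above yields both desired inclusions of balls.

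The main obstacle, which amounts to routine but careful tracking of constants, is to chain these equivalences in the right order: given $\varepsilon>0$, one chooses $\varepsilon'=\varepsilon/(2^{1/p_-})$, then picks $\delta>0$ small enough that $\varrho_{p_n}(x,0)<\delta$ forces $\|x_A\|_{p_n}<\varepsilon'$ (via the $A$-lemma applied to $\sum_{n\in A}|x_n|^{p_n}<\delta$) and simultaneously $\|x_B\|_{p_n}\le\delta<\varepsilon'$, so that \eqref{++} gives $\|x\|_{p_n}<\varepsilon$; the converse inclusion is symmetric, using that smallness of $\|x\|_{p_n}$ controls both $\|x_A\|_{p_n}$ and $\|x_B\|_{p_n}$ by property (v), after which the $A$-lemma converts the $x_A$ bound into a bound on the modular. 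Once the constants line up, the theorem follows at once, which is why the authors call it an easy consequence.
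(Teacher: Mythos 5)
Your proof is correct and follows essentially the same route as the paper: the paper also handles the two pure cases ($p_n<1$ via the two preceding lemmas, $p_n\ge 1$ via $\varrho_{p_n}(x,y)=\|x-y\|_{p_n}$) and then remarks that the mixed case is "quite straightforward" to combine. Your contribution is simply to make that combination explicit by splitting $\mathbb{N}$ into $A$ and $B$ and chaining the estimates with the quasi-triangle inequality, which is a faithful (and more detailed) execution of the paper's intended argument.
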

\begin{proof} When  $0<p_- \le p_n <1 $ then the statement follows from two previous Lemmas. In the case $1\le p_n \le \infty$ we can use that $\varrho_{p_n}(x,y)=\|f-g\|_{p_n}$. From these two observation is quite straightforward to establish the case  $0<p_-\le p_n\le \infty$.
\end{proof}

\begin{lem}[Analogy of Riesz lemma] \label{wdklvnwklbknkl}
Let $(X,\|.\|)$ be a quasi-normed space and $L\varsubsetneqq X$ be a closed subspace. Then for each $\varepsilon>0$ there exists $z\in X$, $\|z\|=1$, such that for all $y\in L$ we have $\|z-y\|>1-\varepsilon$.
\end{lem}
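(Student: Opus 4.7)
The plan is to adapt the standard Riesz lemma proof to the quasi-normed setting, observing that the classical argument never actually uses the (quasi-)triangle inequality: it relies only on positive homogeneity, closedness of $L$, and the fact that $L$ is a linear subspace. The case $\varepsilon\ge 1$ is trivial (pick any $z\in X\setminus L$ with $\|z\|=1$: since $z\notin L$, one has $\|z-y\|>0\ge 1-\varepsilon$ for every $y\in L$), so I may assume $0<\varepsilon<1$.

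First, pick any $x_0\in X\setminus L$, which exists because $L\varsubsetneqq X$, and set $d:=\inf_{y\in L}\|x_0-y\|$. The one substantive preliminary point is to verify $d>0$: if not, there would exist $y_n\in L$ with $\|x_0-y_n\|\to 0$, which by the paper's definition of convergence means $y_n\to x_0$, and closedness of $L$ would then force $x_0\in L$, a contradiction.

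Since $d<d/(1-\varepsilon)$, the definition of infimum yields some $y_0\in L$ with $d\le\|x_0-y_0\|<d/(1-\varepsilon)$. Put $z:=(x_0-y_0)/\|x_0-y_0\|$; by axiom (ii) of the quasi-norm, $\|z\|=1$. For any $y\in L$, a single use of positive homogeneity gives
\[
\|z-y\|=\frac{\|x_0-(y_0+\|x_0-y_0\|\,y)\|}{\|x_0-y_0\|}.
\]
Because $L$ is a linear subspace, $y_0+\|x_0-y_0\|\,y\in L$, so the numerator is at least $d$; the denominator is strictly less than $d/(1-\varepsilon)$, and hence $\|z-y\|>1-\varepsilon$, as required.

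I expect no real obstacle here. The only step that uses anything beyond linear algebra and the homogeneity of $\|\cdot\|$ is the positivity of $d$, and that uses only sequential closedness in the sense already introduced earlier in the paper; the quasi-triangle constant $T$ plays no role in the argument at all.
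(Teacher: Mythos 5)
Your proof is correct and follows essentially the same route as the paper's: establish $d:=\inf_{y\in L}\|x_0-y\|>0$ via closedness, choose a near-minimizer, normalize, and use homogeneity together with the subspace property to bound the distance from below. The only cosmetic differences are that you parametrize the near-minimizer by $d/(1-\varepsilon)$ rather than $d+\eta$ with $\eta<\varepsilon d/(1-\varepsilon)$, you dispatch the trivial case $\varepsilon\ge 1$ explicitly, and you helpfully note that the quasi-triangle constant plays no role.
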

\begin{proof}
Fix $u\in X\setminus L$. First we verify that there is $\delta>0$ such that for all $y\in L$ it is $\|u-y\|\ge \delta$. If no then we can find a sequence $y_n\in L$ with $\|y-y_n\|\rightarrow 0$. Since $L$ is closed we have $y\in L$ which is a contradiction. Denote $d=:\inf\{\|u-y\|;y\in L\}>0$.

Choose $0<\varepsilon<1$ and $\eta<\frac{\varepsilon d}{1-\varepsilon}$. Due to the definition of $d$ we can find $v\in L$ such that $\|u-v\|<d+\eta$.
Set $z=\frac{u-v}{\|u-v\|}$. Clearly $\|z\|=1$.  Since $v+y\|u-v\|\in L$ we have for all $y\in L$
\begin{align*}
&\|z-y\|=\Big\|y-\frac{u-v}{\|u-v\|}\Big\|=\frac{\|y\|u-v\|-u+v\|}{\|u-v\|}\\
&=\frac{\|(v+y\|u-v\|)-u\|}{\|u-v\|}>\frac{d}{d+\eta}>1-\varepsilon.
\end{align*}
\end{proof}

\begin{lem}
Let $0<p_-\le p_n\le \infty$. Then $K\subset \ell_{p_n}$ is compact (in the sense of metric $\varrho_{p_n}$) if and only if from each covering $K\subset \bigcup_{i\in I}B_{p_n}(x_i,\varepsilon_i)$ we can take a finite subset $F\subset I$ such that $K\subset \bigcup_{i\in F}B_{p_n}(x_i,\varepsilon_i)$.
\end{lem}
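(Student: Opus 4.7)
The plan is to reduce the statement to the standard characterization of compactness in a metric space via the Heine--Borel property, using the topological equivalence of $\|\cdot\|_{p_n}$ and $\varrho_{p_n}$ established in the preceding theorem.

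First I would record the consequence of the two lemmas preceding the theorem (together with the trivial identity $\varrho_{p_n}(x,y)=\|x-y\|_{p_n}$ for indices where $p_n\ge 1$): the collections $\{B_{p_n}(x,\varepsilon)\}$ and $\{U_{p_n}(x,\varepsilon)\}$ are both bases for one and the same topology on $\ell_{p_n}$. In particular every $B_{p_n}$-ball is open in the metric space $(\ell_{p_n},\varrho_{p_n})$, and around every point of any $U_{p_n}$-ball one can inscribe a $B_{p_n}$-ball. The center-zero formulation of the two technical lemmas transfers to arbitrary centers because both $\|\cdot\|_{p_n}$ and $\varrho_{p_n}$ are translation invariant.

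For the forward implication, suppose $K$ is compact in the metric space $(\ell_{p_n},\varrho_{p_n})$. Any covering $\{B_{p_n}(x_i,\varepsilon_i)\}_{i\in I}$ of $K$ is then a covering by metric-open sets, and the classical Heine--Borel theorem for compact metric spaces extracts a finite subcover directly. For the reverse implication, assume every $B_{p_n}$-covering of $K$ admits a finite subcover; I would then verify the ordinary Heine--Borel property in the metric topology, which for metric spaces is equivalent to compactness. Given any metric-open cover $\{V_i\}_{i\in I}$ of $K$, for each $y\in K$ pick $i(y)\in I$ with $y\in V_{i(y)}$ and invoke the second preceding lemma (translated to center $y$) to choose $\delta_y>0$ with $B_{p_n}(y,\delta_y)\subset V_{i(y)}$. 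The family $\{B_{p_n}(y,\delta_y)\}_{y\in K}$ covers $K$, so by hypothesis a finite subcollection $B_{p_n}(y_1,\delta_{y_1}),\dots,B_{p_n}(y_m,\delta_{y_m})$ already covers $K$; the corresponding $V_{i(y_1)},\dots,V_{i(y_m)}$ then furnish the desired finite subcover of $\{V_i\}$.

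The argument has no real obstacle: the only point that requires care is transferring the center-zero statements of the two preceding lemmas to arbitrary centers through translation invariance, after which the conclusion is a routine change of basis between the two equivalent neighborhood systems.
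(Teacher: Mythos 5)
Your proposal is correct and follows exactly the route the paper intends: the paper's own proof is the one-liner that compactness transfers because the two topologies coincide, and you simply unpack the routine details (openness of the $B_{p_n}$-balls via the inscribed-$U_{p_n}$-ball lemma plus translation invariance, then the standard Heine--Borel characterization in the metric space). No new idea is introduced and no step is missing.
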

\begin{proof}
The proof immediately follow s from the fact that topologies given by $\varrho_{p_n}$ and $\|.\|_{p_n}$ coincide.
\end{proof}

\begin{thm}
Let $0<\min\{p_-,q_-\}\le p_n,q_n\le \infty$ and assume that $id:\ell_{q_n}\rightarrow\ell_{p_n}$ is compact. Then $b_n(id)\rightarrow 0$.
\end{thm}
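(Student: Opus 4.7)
The plan is to argue by contradiction via a Lebesgue-volume/covering-number comparison. Suppose $b_n(id)\not\to 0$; then there exist $\delta>0$ and infinitely many $n$ with $b_n(id)>\delta$. For each such $n$, the definition of $b_n$ furnishes a subspace $E_n\subset \ell_{q_n}$ with $\dim E_n=n$ and $\|x\|_{p_n}\ge \delta\|x\|_{q_n}$ for all $x\in E_n$. Consequently $id$ is injective on $E_n$, the image $F_n:=id(E_n)\subset \ell_{p_n}$ has dimension $n$, and the set $\delta B_{F_n}:=\{y\in F_n:\|y\|_{p_n}\le\delta\}$ is contained in $id(B_{\ell_{q_n}})$: indeed, any $y\in \delta B_{F_n}$ equals $id(x)$ for some $x\in E_n$ satisfying $\|x\|_{q_n}\le \|y\|_{p_n}/\delta\le 1$.

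Compactness now enters. By hypothesis $id(B_{\ell_{q_n}})$ is totally bounded in $\ell_{p_n}$, so for every $\varepsilon>0$ there exist a finite integer $N(\varepsilon)$, independent of $n$, and points $y_1,\dots,y_{N(\varepsilon)}\in \ell_{p_n}$ with $id(B_{\ell_{q_n}})\subset \bigcup_i B_{\ell_{p_n}}(y_i,\varepsilon)$; these balls then also cover $\delta B_{F_n}$. For every $i$ with $B_{\ell_{p_n}}(y_i,\varepsilon)\cap F_n\neq \emptyset$ I select $y_i'\in B_{\ell_{p_n}}(y_i,\varepsilon)\cap F_n$; if $\mathcal{T}\ge 1$ denotes the quasi-triangle constant of $\|\cdot\|_{p_n}$, then the quasi-triangle inequality yields $B_{\ell_{p_n}}(y_i,\varepsilon)\cap F_n\subset \{y\in F_n:\|y-y_i'\|_{p_n}\le 2\mathcal{T}\varepsilon\}$. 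Thus $\delta B_{F_n}$ is covered by at most $N(\varepsilon)$ quasi-norm balls of radius $2\mathcal{T}\varepsilon$, all centred in $F_n$.

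The contradiction comes from a Lebesgue-volume lower bound applied to this covering. Since $F_n$ is $n$-dimensional, the restriction of $\|\cdot\|_{p_n}$ induces the Euclidean topology on $F_n$, so the unit ball $B_{F_n}:=\{y\in F_n:\|y\|_{p_n}\le 1\}$ is a bounded subset of $F_n$ with positive and finite Lebesgue measure $\mu_n(B_{F_n})$. Translation-invariance of $\mu_n$ together with positive homogeneity of the quasi-norm give $\mu_n(\{y\in F_n:\|y-z\|_{p_n}\le r\})=r^n\mu_n(B_{F_n})$ for every $z\in F_n$ and $r>0$. Comparing measures in the covering constructed above produces $\delta^n\le N(\varepsilon)(2\mathcal{T}\varepsilon)^n$, i.e.\ $N(\varepsilon)\ge (\delta/(2\mathcal{T}\varepsilon))^n$. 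Choosing $\varepsilon=\delta/(4\mathcal{T})$ yields $2^n\le N(\delta/(4\mathcal{T}))$ for infinitely many $n$, which is absurd, and hence $b_n(id)\to 0$. The only delicate point is that the quasi-norm balls need not be convex when some $p_n<1$, but the argument uses only their translation and scaling behaviour against Lebesgue measure, not convexity, so the non-local-convexity of $\ell_{p_n}$ is no real obstacle.
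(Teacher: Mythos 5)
Your volume-comparison proof is correct and takes a genuinely different route from the paper. Both arguments reduce the claim to a covering-number contradiction: total boundedness of $id(B_{\ell_{q_n}})$ in $\ell_{p_n}$ gives a fixed finite $\varepsilon$-net, while $b_n(id)\ge\delta$ for arbitrarily large $n$ forces the covering number of that set to grow without bound. The paper's route is to prove a quasi-normed analogue of the Riesz lemma, use it inside the witnessing $n$-dimensional subspace $E_n\subset\ell_{q_n}$ to build $n$ unit vectors $a_1,\dots,a_n$ that are $(1-\eta)$-separated in $\|\cdot\|_{q_n}$, observe that the Bernstein inequality keeps them $\delta(1-\eta)$-separated in $\|\cdot\|_{p_n}$, and conclude that the balls $B_{p_n}(a_i,\varepsilon)$ are pairwise disjoint, so the covering number is at least $n$. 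You instead work in the image $F_n=id(E_n)$, note that $\delta B_{F_n}\subset id(B_{\ell_{q_n}})$ is covered by $N(\varepsilon)$ quasi-balls of radius $2\mathcal{T}\varepsilon$ centred in $F_n$, and compare Lebesgue measures inside the $n$-dimensional space $F_n$ to get $N(\varepsilon)\ge\bigl(\delta/(2\mathcal{T}\varepsilon)\bigr)^n$. Your approach sidesteps the Riesz lemma and yields an exponential rather than linear lower bound on the covering number; the cost is the appeal to the fact that the restriction of $\|\cdot\|_{p_n}$ to a finite-dimensional subspace is comparable to a Euclidean norm, so that the quasi-norm ball has finite positive Lebesgue measure. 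That is true, but the lower bound $\|\cdot\|_{p_n}\ge c\|\cdot\|_2$ on $F_n$ relies on lower semicontinuity of the quasi-norm (via Fatou for the modular) together with compactness of the Euclidean sphere, and since local convexity is unavailable when some $p_n<1$, this deserves an explicit sentence in a polished write-up.
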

\begin{proof}
Assume $\limsup_{n\rightarrow\infty}b_n(id)\ge \delta>0$. Choose $\varepsilon<\frac{\delta}{2T}$ where $T$ is the constant of triangle inequality in $\ell_{p_n}$. Since $B_{p_n}(0,1)$ is pre-compact in $\ell_{p_n}$ we have a finite number of elements $b_1,b_2,\dots,b_{n_0}\in \ell_{p_n}$ such that $\bigcup_{i=1}^{n_0}B_{p_n}(b_i,\varepsilon)\supset \overline{B_{p_n}(0,1)}$.

Fix now $n>n_0$, $b_n(id)\ge\delta$. Take $0<\eta<1-\frac{2T}{\delta}\varepsilon$. By Lemma \ref{wdklvnwklbknkl} we have $a_1,a_2,\dots,a_n\in \ell_{p_n}$ such that $\|a_i\|_{q_n}=1$ and $\|a_i-a_j\|_{q_n}>1-\eta$. By the definition of $b_n(id)$ and the fact $b_n(id)\ge\delta$ we have
\begin{align*}
&\|a_i\|_{p_n}\ge \delta, \ \|a_i-a_j\|_{p_n}\ge\delta(1-\eta).
\end{align*}
We show that the quasi-balls $B_{p_n}(a_i,\varepsilon)$ are pairwise disjoint. Indeed  taking $x,y$ with $\|x-a_i\|_{p_n}<\varepsilon$, $\|y-a_j\|_{p_n}<\varepsilon$ we have
\begin{align*}
&\|x-y\|_{p_n}\ge \frac{\|a_i-a_j\|_{p_n}}{T}-\|x-a_i\|_{p_n}-\|a_j-y\|_{p_n}\ge \frac{\delta(1-\eta)}{T}-2\varepsilon>0.
\end{align*}
We see from this fact that minimal number of quasi-balls covering $\overline{B_{p_n}(0,1)}$ is at least $n$. It is a contradiction with the fact that we covered   $\overline{B_{p_n}(0,1)}$ by $n_0$ quasi-balls. So $\lim_{n\rightarrow\infty}b_n(id)=0$.
\end{proof}

\end{document}